\documentclass{amsart}
\usepackage{amsmath}
\usepackage{amscd}
\usepackage{amssymb}
\usepackage{amsthm}
\RequirePackage{filecontents}
\usepackage{fullpage}
\usepackage{longtable}
\usepackage[numbers]{natbib}  
\usepackage[draft]{hyperref}

\begin{filecontents*}{\jobname.bib}

@article {B,
    AUTHOR = {Borcherds, Richard},
     TITLE = {Automorphic forms with singularities on {G}rassmannians},
   JOURNAL = {Invent. Math.},
  FJOURNAL = {Inventiones Mathematicae},
    VOLUME = {132},
      YEAR = {1998},
    NUMBER = {3},
     PAGES = {491--562},
      ISSN = {0020-9910},
   MRCLASS = {11F37 (11F22 14J28 17B67 57R57)},
  MRNUMBER = {1625724},
MRREVIEWER = {I. Dolgachev},
       DOI = {10.1007/s002220050232},
       URL = {http://dx.doi.org/10.1007/s002220050232},
}

@book {Br,
    AUTHOR = {Bruinier, Jan},
     TITLE = {Borcherds products on {O}(2, {$l$}) and {C}hern classes of
              {H}eegner divisors},
    SERIES = {Lecture Notes in Mathematics},
    VOLUME = {1780},
 PUBLISHER = {Springer-Verlag, Berlin},
      YEAR = {2002},
     PAGES = {viii+152},
      ISBN = {3-540-43320-1},
   MRCLASS = {11F55 (11F23 11F27 11G18)},
  MRNUMBER = {1903920},
MRREVIEWER = {Rainer Schulze-Pillot},
       DOI = {10.1007/b83278},
       URL = {http://dx.doi.org/10.1007/b83278},
}

@article {BB,
    AUTHOR = {Bruinier, Jan and Bundschuh, Michael},
     TITLE = {On {B}orcherds products associated with lattices of prime
              discriminant},
      NOTE = {Rankin memorial issues},
   JOURNAL = {Ramanujan J.},
  FJOURNAL = {Ramanujan Journal. An International Journal Devoted to the
              Areas of Mathematics Influenced by Ramanujan},
    VOLUME = {7},
      YEAR = {2003},
    NUMBER = {1-3},
     PAGES = {49--61},
      ISSN = {1382-4090},
   MRCLASS = {11F41 (11F55)},
  MRNUMBER = {2035791},
MRREVIEWER = {Matthew G. Boylan},
       DOI = {10.1023/A:1026222507219},
       URL = {http://dx.doi.org/10.1023/A:1026222507219},
}

@article {C,
    AUTHOR = {Cohen, Henri},
     TITLE = {Sums involving the values at negative integers of
              {$L$}-functions of quadratic characters},
   JOURNAL = {Math. Ann.},
  FJOURNAL = {Mathematische Annalen},
    VOLUME = {217},
      YEAR = {1975},
    NUMBER = {3},
     PAGES = {271--285},
      ISSN = {0025-5831},
   MRCLASS = {10H10},
  MRNUMBER = {0382192},
MRREVIEWER = {Larry J. Goldstein},
       URL = {https://doi.org/10.1007/BF01436180},
}

@unpublished {Dern,
	AUTHOR = {Dern, Tobias},
	TITLE = {Hermitesche {M}odulformen zweiten {G}rades},
	NOTE = {Dissertation (advisers A. Krieg, N.-P. Skoruppa), RWTH Aachen},
	URL = {http://publications.rwth-aachen.de/record/95377/files/Dern_Tobias.pdf},
	YEAR = {2001},
}

@article {DK1,
    AUTHOR = {Dern, Tobias and Krieg, Aloys},
     TITLE = {Graded rings of {H}ermitian modular forms of degree 2},
   JOURNAL = {Manuscripta Math.},
  FJOURNAL = {Manuscripta Mathematica},
    VOLUME = {110},
      YEAR = {2003},
    NUMBER = {2},
     PAGES = {251--272},
      ISSN = {0025-2611},
   MRCLASS = {11F55 (11F27)},
  MRNUMBER = {1962537},
MRREVIEWER = {B. Ramakrishnan},
       DOI = {10.1007/s00229-002-0339-z},
       URL = {https://doi.org/10.1007/s00229-002-0339-z},
}

@article {DK2,
    AUTHOR = {Dern, Tobias and Krieg, Aloys},
     TITLE = {The graded ring of {H}ermitian modular forms of degree 2 over
              {${\mathbb{Q}}(\sqrt{-2})$}},
   JOURNAL = {J. Number Theory},
  FJOURNAL = {Journal of Number Theory},
    VOLUME = {107},
      YEAR = {2004},
    NUMBER = {2},
     PAGES = {241--265},
      ISSN = {0022-314X},
   MRCLASS = {11F55 (11F27)},
  MRNUMBER = {2072387},
MRREVIEWER = {B. Ramakrishnan},
       DOI = {10.1016/j.jnt.2003.10.007},
       URL = {https://doi.org/10.1016/j.jnt.2003.10.007},
}

@book {EZ,
    AUTHOR = {Eichler, Martin and Zagier, Don},
     TITLE = {The theory of {J}acobi forms},
    SERIES = {Progress in Mathematics},
    VOLUME = {55},
 PUBLISHER = {Birkh\"auser Boston, Inc., Boston, MA},
      YEAR = {1985},
     PAGES = {v+148},
      ISBN = {0-8176-3180-1},
   MRCLASS = {11F11 (11F27 11F99)},
  MRNUMBER = {781735},
MRREVIEWER = {Martin L. Karel},
       DOI = {10.1007/978-1-4684-9162-3},
       URL = {http://dx.doi.org/10.1007/978-1-4684-9162-3},
}

@article {G,
    AUTHOR = {Gritsenko, Valeri},
     TITLE = {Fourier-{J}acobi functions in {$n$} variables},
   JOURNAL = {Zap. Nauchn. Sem. Leningrad. Otdel. Mat. Inst. Steklov.
              (LOMI)},
  FJOURNAL = {Zapiski Nauchnykh Seminarov Leningradskogo Otdeleniya
              Matematicheskogo Instituta imeni V. A. Steklova Akademii Nauk
              SSSR (LOMI)},
    VOLUME = {168},
      YEAR = {1988},
    NUMBER = {Anal. Teor. Chisel i Teor. Funktsi\u\i . 9},
     PAGES = {32--44, 187--188},
      ISSN = {0373-2703},
   MRCLASS = {11F55},
  MRNUMBER = {982481},
MRREVIEWER = {K.-B. Gundlach},
       DOI = {10.1007/BF01303648},
       URL = {https://doi.org/10.1007/BF01303648},
}

@article {GN2,
    AUTHOR = {Gritsenko, Valeri and Nikulin, Viacheslav},
     TITLE = {Automorphic forms and {L}orentzian {K}ac-{M}oody algebras.
              {II}},
   JOURNAL = {Internat. J. Math.},
  FJOURNAL = {International Journal of Mathematics},
    VOLUME = {9},
      YEAR = {1998},
    NUMBER = {2},
     PAGES = {201--275},
      ISSN = {0129-167X},
   MRCLASS = {11F22 (11F46 11F55 14J28 17B67 81R10)},
  MRNUMBER = {1616929},
MRREVIEWER = {I. Dolgachev},
       DOI = {10.1142/S0129167X98000117},
       URL = {https://doi.org/10.1142/S0129167X98000117},
}

@article {K,
    AUTHOR = {Kohnen, Winfried},
     TITLE = {Newforms of half-integral weight},
   JOURNAL = {J. Reine Angew. Math.},
  FJOURNAL = {Journal f\"ur die Reine und Angewandte Mathematik},
    VOLUME = {333},
      YEAR = {1982},
     PAGES = {32--72},
      ISSN = {0075-4102},
   MRCLASS = {10D12},
  MRNUMBER = {660784},
MRREVIEWER = {Wen Ch'ing Winnie Li},
       DOI = {10.1515/crll.1982.333.32},
       URL = {https://doi.org/10.1515/crll.1982.333.32},
}

@incollection {Kr,
    AUTHOR = {Krieg, Aloys},
     TITLE = {Another quaternionic {M}aa\ss \ space},
 BOOKTITLE = {Number theory},
    SERIES = {Ramanujan Math. Soc. Lect. Notes Ser.},
    VOLUME = {15},
     PAGES = {43--50},
 PUBLISHER = {Ramanujan Math. Soc., Mysore},
      YEAR = {2011},
   MRCLASS = {11F03 (11R52)},
  MRNUMBER = {2905486},
MRREVIEWER = {Rainer Schulze-Pillot},
}

@unpublished {M,
	AUTHOR = {Ma, Shouhei},
	TITLE = {Quasi-pullback of {B}orcherds products},
	NOTE = {Preprint},
	URL = {arXiv:1801.08333},
	YEAR = {2018},
}

@article {McG,
    AUTHOR = {McGraw, William},
     TITLE = {The rationality of vector valued modular forms associated with
              the {W}eil representation},
   JOURNAL = {Math. Ann.},
  FJOURNAL = {Mathematische Annalen},
    VOLUME = {326},
      YEAR = {2003},
    NUMBER = {1},
     PAGES = {105--122},
      ISSN = {0025-5831},
   MRCLASS = {11F30 (11F27)},
  MRNUMBER = {1981614},
MRREVIEWER = {Jan Hendrik Bruinier},
       DOI = {10.1007/s00208-003-0413-1},
       URL = {https://doi.org/10.1007/s00208-003-0413-1},
}

@unpublished {M2,
	AUTHOR = {Mocanu, Andreea},
	TITLE = {Poincar\'e and {E}isenstein series for {J}acobi forms of lattice index},
	NOTE = {Preprint},
	URL = {arXiv:1712.08174},
	YEAR = {2017},
}

@article {R,
    AUTHOR = {Raum, Martin},
     TITLE = {Spans of special cycles of codimension less than 5},
   JOURNAL = {J. Reine Angew. Math.},
  FJOURNAL = {Journal f\"ur die Reine und Angewandte Mathematik. [Crelle's
              Journal]},
    VOLUME = {718},
      YEAR = {2016},
     PAGES = {39--57},
      ISSN = {0075-4102},
   MRCLASS = {14G35 (11F30 11F50 14C15)},
  MRNUMBER = {3545878},
MRREVIEWER = {Marc-Hubert Nicole},
       DOI = {10.1515/crelle-2014-0046},
       URL = {https://doi.org/10.1515/crelle-2014-0046},
}

@article {Sch,
    AUTHOR = {Scheithauer, Nils},
     TITLE = {The {W}eil representation of {${SL}_2(\mathbb{Z})$} and some
              applications},
   JOURNAL = {Int. Math. Res. Not. IMRN},
  FJOURNAL = {International Mathematics Research Notices. IMRN},
      YEAR = {2009},
    NUMBER = {8},
     PAGES = {1488--1545},
      ISSN = {1073-7928},
   MRCLASS = {11F27 (11F20 11F55 11H56 17B67 81R10)},
  MRNUMBER = {2496771},
MRREVIEWER = {Rainer Schulze-Pillot},
}

@unpublished {W1,
	AUTHOR = {Williams, Brandon},
	TITLE = {Computing modular forms for the {W}eil representation},
	NOTE = {Dissertation (adviser R. Borcherds), UC Berkeley},
	YEAR = {2018},
}

@article {W,
	AUTHOR = {Williams, Brandon},
	TITLE = {Poincar\'e square series for the {W}eil representation},
	JOURNAL = {Ramanujan J.},
	FJOURNAL = {Ramanujan Journal. An International Journal Devoted to the
              Areas of Mathematics Influenced by Ramanujan},
	VOLUME = {(in press)},
	DOI = {10.1007/s11139-017-9986-2},
	URL = {https://doi.org/10.1007/s11139-017-9986-2},
}

\end{filecontents*}

\usepackage{tikz}
\usetikzlibrary{arrows}
\usetikzlibrary{positioning}
\usepackage[
top = 2.5cm,
bottom = 2.5cm,
left = 3cm,
right = 3cm]{geometry}

\theoremstyle{plain}
\newtheorem{prop}{Proposition}

\theoremstyle{definition}
\newtheorem{defn}[prop]{Definition}
\newtheorem{rem}[prop]{Remark}
\newtheorem{ex}[prop]{Example}

\title{Remarks on the theta decomposition of vector-valued Jacobi forms}
\author{Brandon Williams }

\subjclass[2010]{11F27,11F50}
\address{Fachbereich Mathematik \\ Technische Universit\"at Darmstadt \\ 64289 Darmstadt, Germany}

\email{bwilliams@mathematik.tu-darmstadt.de}

\begin{document}

\nocite{*}

\maketitle

\begin{abstract} We study the theta decomposition of Jacobi forms of nonintegral lattice index for a representation that arises in the theory of Weil representations associated to even lattices, and suggest possible applications. \end{abstract}

\section{Introduction}

This note is concerned with modular forms for the (dual) Weil representation $\rho^*$ attached to an even lattice $\Lambda$ with quadratic form $Q$. Letting $A = \Lambda'/\Lambda$ denote the discriminant group of $\Lambda$, this is a representation of the metaplectic group $Mp_2(\mathbb{Z})$ on $\mathbb{C}[A]$. Weil representations are a family of representations with better-than-usual arithmetic properties; for one thing, there are bases of modular forms in every weight with integral coefficients \cite{McG}. (Inducing the trivial representation from noncongruence subgroups to $SL_2(\mathbb{Z})$ gives plenty of examples of representations for which this fails.) These representations are important in the context of theta lifts such as \cite{B}. \par \

When $\Lambda$ is positive-definite it is well-known that modular forms for $\rho$ are essentially the same as Jacobi forms of lattice index $\Lambda$. This is a higher-dimensional version of the theta decomposition of chapter 5 of \cite{EZ}. One immediate advantage of Jacobi forms is their natural algebraic structure; multiplying Jacobi forms corresponds to an unintuitive operation on vector-valued modular forms. \par \

In the vector-valued setting, many objects that are usually indexed by integers $n \in \mathbb{Z}$ have to be replaced by pairs $(n,\gamma)$ with $\gamma \in A$ and $n \in \mathbb{Z} - Q(\gamma)$ (as in the final paragraph of the introduction of \cite{Br}.) In particular we might consider Jacobi forms of index $(m,\beta)$ where $\beta \in A$ and $m \in \mathbb{Z} - Q(\beta)$ is not generally integral. Such Jacobi forms of rational index have appeared in \cite{W}. \par \

The general theory of Jacobi forms indexed by a rational, symmetric matrix was worked out by Raum \cite{R}. Nonintegral indices force the Heisenberg group to act through a nontrivial representation. The irreducible representations that can arise were classified in \cite{R}, where theta decompositions that reduce all such Jacobi forms to vector-valued modular forms also appear. \par \

In the first half of this note we define a family of (generally not irreducible) representations $\sigma_B^*$ of the Heisenberg group that appear when one considers the Fourier-Jacobi expansion of Siegel theta functions. For these Jacobi forms the theta decomposition is particularly explicit and has a simpler proof; it is little more than applying an index-raising Hecke operator to reduce to the theta decomposition of integral-index Jacobi forms. \par \

In the second half of this note we suggest some applications. We remark that Eisenstein series and Poincar\'e series match up under this isomorphism (proposition 8) so computing bases of rather general Jacobi forms reduces to Bruinier's formulas (\cite{Br}, section 1.2). We point out that setting $z=0$ in the resulting Jacobi forms leads to some isomorphisms between modular forms for lattices of unrelated discriminants and work out an amusing example (example 11). Finally we give another interpretation of quasi-pullbacks of Borcherds products (as in \cite{M}) and construct some Borcherds products of small weight on lattices of signature $(4,2)$. \par

\section{Jacobi forms of lattice index}

For simplicity we will work with fixed Gram matrices, rather than abstract discriminant forms or even lattices. Therefore let $\mathbf{S}$ be a Gram matrix (that is, a nondegenerate, symmetric integer matrix with even diagonal) of size $e \in \mathbb{N}$ and let $\rho^* = \rho^*_{\mathbf{S}}$ denote the (dual) Weil representation of the lattice $\mathbb{Z}^{e}$ with quadratic form $Q(x) = \frac{1}{2}x^T \mathbf{S}x$ and bilinear form $\langle x,y \rangle = Q(x+y) - Q(x) - Q(y)$ (as in \cite{B},\cite{Br}). If $A = A(\mathbf{S})$ denotes the discriminant group $\mathbf{S}^{-1} \mathbb{Z}^{e} / \mathbb{Z}^{e}$ and $\mathfrak{e}_{\gamma}$, $\gamma \in A$ denotes the natural basis of the group ring $\mathbb{C}[A]$ then on the usual generators $$S = \Big( \begin{pmatrix} 0 & -1 \\ 1 & 0 \end{pmatrix}, \sqrt{\tau} \Big), \quad T = \Big( \begin{pmatrix} 1 & 1 \\ 0 & 1 \end{pmatrix}, 1 \Big)$$ of $Mp_2(\mathbb{Z})$, $\rho^*$ is given by \begin{align*} &\rho^*(S)\mathfrak{e}_{\gamma} = \frac{\mathbf{e}(\mathrm{sig}(\mathbf{S}) / 8)}{\sqrt{|A|}} \sum_{\beta \in A} \mathbf{e}\Big( \langle \gamma, \beta \rangle \Big) \mathfrak{e}_{\beta}, \\ &\rho^*(T) \mathfrak{e}_{\gamma} = \mathbf{e}\Big( - Q(\gamma) \Big) \mathfrak{e}_{\gamma}.\end{align*} Here $\mathbf{e}(\tau) = e^{2\pi i \tau}$ and $\mathrm{sig}(\mathbf{S}) = b^+ - b^-$ if $b^+,b^-$ are the numbers of positive and negative eigenvalues of $\mathbf{S}$. \par \

For $N \in \mathbb{N}$, let $\mathcal{H}_{N}$ denote the Heisenberg group of degree $N$: $$\mathcal{H}_{N} = \Big\{ (\lambda,\mu,t) \in \mathbb{Z}^{N} \times \mathbb{Z}^{N} \times \mathbb{Z}^{N \times N}: \; t - \lambda \mu^T \; \text{symmetric} \Big\}$$ with group operation $$(\lambda_1,\mu_1,t_1) \cdot (\lambda_2,\mu_2,t_2) = (\lambda_1 + \lambda_2,\mu_1 + \mu_2, t_1 + t_2 + \lambda_1\mu_2^T - \mu_1 \lambda_2^T).$$ For any matrix $B \in \mathbb{Q}^{e \times N}$ for which $\mathbf{S}B$ is integral, we can define a unitary representation $\sigma_B^*$ of $\mathcal{H}_{N}$ on $\mathbb{C}[A]$ by $$\sigma_B^*(\lambda,\mu,t) \mathfrak{e}_{\gamma} = \mathbf{e}\Big( -\gamma^T \mathbf{S} B \mu - \frac{1}{2} \mathrm{tr}(B^T \mathbf{S} B (t - \lambda \mu^T)) \Big) \mathfrak{e}_{\gamma - B \lambda},$$ and this depends only on $B$ mod $\mathbb{Z}^{e \times N}$. (When $N = 1$ this reduces to the representations $\sigma_{\beta}^*$ of \cite{W}.) These representations are compatible with $\rho^*$ in the sense that $$\rho^*(M)^{-1} \sigma_B^*(\zeta) \rho^*(M) = \sigma_B^*(\zeta \cdot M),$$ where $Mp_2(\mathbb{Z})$ acts on $\mathcal{H}_{N}$ from the right by $$(\lambda,\mu,t) \cdot \begin{pmatrix} a & b \\ c & d \end{pmatrix} = (a \lambda + c \mu, b \lambda + d \mu, t),$$ so they combine to a representation $\rho_B^*$ of the degree $N$ Jacobi group: $$\rho_B^* : \mathcal{J}_{N} = \mathcal{H}_{N} \rtimes Mp_2(\mathbb{Z}) \longrightarrow \mathrm{GL}\, \mathbb{C}[A],$$ $$\rho_B^*(\lambda,\mu,t,M) = \rho^*(M) \sigma_B^*(\lambda,\mu,t).$$ All of this can be shown directly but it is easier to prove by restricting the Weil representation of Siegel modular groups to their parabolic subgroups. \par \

\begin{defn} A \textbf{modular form} of weight $k \in \frac{1}{2}\mathbb{N}_0$ for the Gram matrix $\mathbf{S}$ is a holomorphic function $f : \mathbb{H} \rightarrow \mathbb{C}[A]$ that satisfies the functional equations $$f\Big( \frac{a \tau + b}{c \tau + d} \Big) = (c \tau + d)^k \rho^*(M) f(\tau), \quad M = \Big( \begin{pmatrix} a & b \\ c & d \end{pmatrix}, \sqrt{c\tau + d} \Big) \in Mp_2(\mathbb{Z})$$ and is bounded in $\infty$. The space of modular forms of weight $k$ will be denoted $M_k(\mathbf{S})$.
\end{defn}

Any modular form can be expanded as a Fourier series $f(\tau) = \sum_{\gamma \in A(\mathbf{S})} \sum_{n \in \mathbb{Z} - Q(\gamma)} c(n,\gamma) q^n \mathfrak{e}_{\gamma}$ where $c(n,\gamma) = 0$ unless $n \ge 0$. It follows from the functional equation under $Z = (-I,i)$ that $M_k(\mathbf{S}) = 0$ unless $k + \mathrm{sig}(\mathbf{S})/2$ is an integer. Moreover the coefficients $c(n,\gamma)$ satisfy $$c(n,-\gamma) = (-1)^{k + \mathrm{sig}(\mathbf{S})/2} c(n,\gamma)$$ so we refer to $k$ as a \textbf{symmetric} or \textbf{antisymmetric weight} as $k + \mathrm{sig}(\mathbf{S})/2$ is even or odd, respectively. \par \

\begin{defn} Let $\mathcal{M} \in \mathbb{Q}^{N \times N}$ be a symmetric positive-definite matrix and let $B$ be a matrix as above. A \textbf{Jacobi form} of weight $k \in \frac{1}{2} \mathbb{N}_0$ and index $(\mathcal{M},B)$ for the Gram matrix $\mathbf{S}$ is a holomorphic function $\Phi : \mathbb{H} \times \mathbb{C}^{N} \rightarrow \mathbb{C}[A]$ that satisfies the functional equations $$\Phi \Big( \frac{a \tau + b}{c \tau + d}, \frac{z}{c \tau + d} \Big) = (c \tau + d)^k \mathbf{e}\Big( \frac{c}{c \tau + d} z^T \mathcal{M} z \Big) \rho^*(M) \Phi(\tau,z), \quad M = \Big( \begin{pmatrix} a & b \\ c & d \end{pmatrix}, \sqrt{c \tau + d} \Big) \in Mp_2(\mathbb{Z})$$ and $$\Phi(\tau, z + \lambda \tau + \mu) = \mathbf{e}\Big(-\tau \lambda^T \mathcal{M} \lambda - \mathrm{tr}\, \mathcal{M}(2 \lambda z^T + t + \mu \lambda^T) \Big) \sigma_B^*(\lambda,\mu,t) \Phi(\tau,z)$$ for all $(\lambda,\mu,t)$ in the Heisenberg group (in particular, the left side of the above equation should be independent of $t$) and for which the usual vanishing condition on Fourier coefficients holds (see below). We denote the space of Jacobi forms as above by $J_{k,\mathcal{M},B}(\mathbf{S})$.
\end{defn}

This transformation law is often expressed through the slash operator $|_{k,\mathcal{M},B}$: $$\Phi \Big|_{k,\mathcal{M},B} M(\tau,z) = (c \tau + d)^{-k} \mathbf{e}\Big( -\frac{c}{c \tau + d} z^T \mathcal{M} z \Big) \rho^*(M)^{-1} \Phi(M \cdot \tau,z)$$ and $$\Phi \Big|_{k,\mathcal{M},B} (\lambda,\mu,t)(\tau,z) = \mathbf{e}\Big( \tau \lambda^T \mathcal{M} \lambda + \mathrm{tr}\, \mathcal{M}(2 \lambda z^T + t + \mu \lambda^T) \Big) \sigma_B^*(\lambda,\mu,t)^{-1} \Phi(\tau,z + \lambda \tau + \mu).$$

Jacobi forms have Fourier expansions in both variables; we write this out as $$\Phi(\tau,z) = \sum_{\gamma \in A(\mathbf{S})} \sum_{n \in \mathbb{Q}} \sum_{r \in \mathbb{Q}^{N}} c(n,r,\gamma) q^n \zeta^r \mathfrak{e}_{\gamma},$$ where $q^n = e^{2\pi i n \tau}$ and $\zeta^r = e^{2\pi i r^T z}$. The vanishing condition referred to above is then $c(n,r,\gamma) = 0$ whenever $4n - r^T \mathcal{M}^{-1} r < 0$. The indices $(n,r,\gamma)$ and $(\mathcal{M},B)$ that can actually occur are also restricted as follows: \par \

\noindent (i) the transformation under $T \in Mp_2(\mathbb{Z})$ implies $c(n,r,\gamma) = 0$ unless $n \in \mathbb{Z} - Q(\gamma)$; \\ (ii) the transformation under $(0,0,e_i e_j^T + e_j e_i^T) \in \mathcal{H}_{N}$ implies $c(n,r,\gamma) = 0$ unless $\mathcal{M} + \frac{1}{2}B^T \mathbf{S}B$ is half-integral, with integral diagonal; \\ (iii) the transformation under $(0,\mu,0) \in \mathcal{H}_{N}$ implies $c(n,r,\gamma) = 0$ unless $r \in \mathbb{Z}^{N} - B^T \mathbf{S} \gamma$. \par \

Moreover, the familiar restriction on weights holds: $J_{k,\mathcal{M},B}(\mathbf{S}) = 0$ unless $k + \mathrm{sig}(\mathbf{S})/2$ is integral, in which case $c(n,-r,-\gamma) = (-1)^{k + \mathrm{sig}(\mathbf{S})/2} c(n,r,\gamma)$ for all Jacobi forms $\sum c(n,r,\gamma) q^n \zeta^r \mathfrak{e}_{\gamma}$. \par \

\begin{rem} Vector-valued modular forms and Jacobi forms for a fixed Gram matrix $\mathbf{S}$ do not have natural ring structures although they are modules over the ring of scalar-valued modular forms $M_*$ in a natural way. However we will make reference to the operation of tensoring at several points. If $\mathbf{S}_1$ and $\mathbf{S}_2$ are two Gram matrices (possibly of different sizes) then tensoring their Weil representations gives $$\rho^*_{\mathbf{S}_1} \otimes \rho^*_{\mathbf{S}_2} \cong \rho^*_{\mathbf{S}_1 \oplus \mathbf{S}_2}, \quad \mathbf{S}_1 \oplus \mathbf{S}_2 = \begin{pmatrix} \mathbf{S}_1 & 0 \\ 0 & \mathbf{S}_2 \end{pmatrix}$$ and therefore the tensor product defines a bilinear map $$\otimes : M_{k_1}(\mathbf{S}_1) \times M_{k_2}(\mathbf{S}_2) \longrightarrow M_{k_1 + k_2}(\mathbf{S}_1 \oplus \mathbf{S}_2)$$ for all valid weights $k_1,k_2$. Similarly if $(\mathcal{M}_1,B_1)$ and $(\mathcal{M}_2,B_2)$ are valid indices for Jacobi forms with the same number of elliptic variables (i.e. the same $N$) then it follows from $$\sigma_{B_1}^* \otimes \sigma_{B_2}^* \cong \sigma_B^*, \quad B = \begin{pmatrix} B_1 \\ B_2 \end{pmatrix}$$ that the tensor product defines a bilinear map $$\otimes : J_{k_1,\mathcal{M}_1,B_1}(\mathbf{S}_1) \times J_{k_2,\mathcal{M}_2,B_2}(\mathbf{S}_2) \longrightarrow J_{k_1+k_2,\mathcal{M}_1+\mathcal{M}_2,B}(\mathbf{S}_1 \oplus \mathbf{S}_2).$$
\end{rem}

\begin{rem} We recover the familiar definition of Jacobi forms of lattice index (e.g. \cite{G}) by restricting to the case that $B=0$ and $\mathrm{det}(\mathbf{S}) = 1$ and by letting $2\mathcal{M}$ be the Gram matrix of the index lattice with respect to any basis.
\end{rem}

\section{Hecke $U$-operators}

The Hecke operators $U_{\ell}$, $\ell \in \mathbb{N}$ of Eichler and Zagier \cite{EZ} generalize immediately to Jacobi forms of lattice index: we define $$U_{\ell} : J_{k,\mathcal{M},B}(\mathbf{S}) \rightarrow J_{k,\ell^2 \mathcal{M}, \ell B}(\mathbf{S}), \quad U_{\ell}\Phi(\tau,z) = \Phi(\tau,\ell z).$$ The map $U_{\ell}$ is injective and $M_*$-linear and its image of $U_{\ell}$ consists of those Jacobi forms whose Fourier coefficients $c(n,r,\gamma)$ vanish unless $\ell^{-1} r \in \mathbb{Z}^{N} - B^T \mathbf{S} \gamma.$ \par \

There is a corresponding operator $U_{\ell}$ that acts on modular forms for the Weil representation attached to a (not necessarily direct) sum of two quadratic forms. Let $\mathbf{S}_1 \in \mathbb{Z}^{e \times e}, \mathbf{S}_2 \in \mathbb{Z}^{N \times N}$ be Gram matrices and let $U \in \mathbb{Z}^{e \times N}$. We define $$U_{\ell} = U_{\ell}^{e,N} : M_k \Big( \begin{pmatrix} \mathbf{S}_1 & U \\ U^T & \mathbf{S}_2 \end{pmatrix} \Big) \longrightarrow M_k \Big( \begin{pmatrix} \mathbf{S}_1 & \ell U \\ \ell U^T & \ell^2 \mathbf{S}_2 \end{pmatrix} \Big)$$ as the pullback $\uparrow_{\mathbb{Z}^{e} \times \mathbb{Z}^{N}}^{\mathbb{Z}^{e} \times \ell \mathbb{Z}^{N}}$ in the sense of lemma 5.5 of \cite{Br}; that is, $$U_{\ell} \Big[ \sum_{n,\gamma} c(n,\gamma) q^n \mathfrak{e}_{\gamma} \Big] = \sum_{n,\gamma} c(n,\gamma) q^n \times \sum_{p(\delta) = \gamma} \mathfrak{e}_{\delta},$$ where $p$ is a projection map that amounts to multiplying the second block component by $\ell$. \par \

\begin{ex} The weight three Eisenstein series for the Weil representation attached to the lattice with Gram matrix $\begin{pmatrix} 2 & 1 \\ 1 & 2 \end{pmatrix}$ is $$E_3(\tau) = (1 + 72q + 270q^2 + 720q^3 + ...) \mathfrak{e}_{(0,0)} + (27q^{2/3} + 216q^{5/3} + 459q^{8/3} +...) (\mathfrak{e}_{(1/3,1/3)} + \mathfrak{e}_{(2/3,2/3)}).$$ Letting $e = N = 1$, its image under the Hecke map $U_2$ is \begin{align*} U_2 E_3(\tau) &= (1 + 72q + 270q^2 + ...) (\mathfrak{e}_{(0,0)} + \mathfrak{e}_{(0,1/2)}) + \\ &\quad +  (27q^{2/3} + 216q^{5/3} + 459q^{8/3} + ...) (\mathfrak{e}_{(1/3,2/3)} + \mathfrak{e}_{(2/3,1/3)} + \mathfrak{e}_{(1/3,1/6)} + \mathfrak{e}_{(2/3,5/6)}), \end{align*} which is a weight three modular form for the Weil representation attached to the Gram matrix $\begin{pmatrix} 2 & 2 \\ 2 & 8 \end{pmatrix}$.
\end{ex}

\begin{rem} One can define $U_{\ell}$ on arbitrary (holomorphic) functions $f(\tau,z)$ by $U_{\ell} f(\tau,z)= f(\tau,\ell z)$. Then $U_{\ell}$ respects the Petersson slash operator in the sense that $$U_{\ell} \Big( f|_{k,\mathcal{M},B} M \Big) = (U_{\ell} f) \Big|_{k,\ell^2 \mathcal{M}, \ell B} M$$ and $$U_{\ell} \Big( f |_{k,\mathcal{M},B} (\ell \lambda, \ell \mu, \ell^2 t) \Big) = (U_{\ell} f) \Big|_{k,\ell^2 \mathcal{M}, \ell B} (\lambda, \mu, t).$$ A similar statement holds for the operator $U_{\ell}$ on modular forms.
\end{rem}

\section{Theta decomposition}

Let $\mathbf{S}_1$ and $\mathbf{S}_2$ be Gram matrices of sizes $e,N$ where $\mathbf{S}_2$ is positive-definite and set $\mathcal{M} = \frac{1}{2}\mathbf{S}_2$. There is an isomorphism of modules $$M_{k-N/2}(\mathbf{S}_1 \oplus \mathbf{S}_2) \longrightarrow J_{k,\mathcal{M},0}(\mathbf{S}_1)$$ over the ring $M_*$ of scalar modular forms which is essentially given by tensoring with the theta function $\Theta_{\mathbf{S}_2}(\tau,z)$ of $\mathbf{S}_2$ and applying a trace map (see the next section for more details) and which identifies the modular form $$F(\tau) = \sum_{\gamma \in A(\mathbf{S}_1 \oplus \mathbf{S}_2)} \sum_n c(n,\gamma) q^n \mathfrak{e}_{\gamma}$$ with the Jacobi form $$\Phi(\tau,z) = \Theta(F)(\tau,z) = \sum_{\gamma \in A(\mathbf{S}_1)} \sum_{n,r} c(n,(\gamma,\mathbf{S}_2^{-1} r)) q^{n + r^T \mathbf{S}_2^{-1} r / 2} \zeta^r \mathfrak{e}_{\gamma}.$$


In this section we record a simple generalization of this. Let $\mathbf{S}$ be a Gram matrix of size $e$ and let $B \in \mathbb{Q}^{e \times N}$ be a matrix for which $\mathbf{S}B$ is integral. Also let $\mathcal{M} \in \mathbb{Q}^{N \times N}$ be a symmetric positive-definite matrix for which $2\mathcal{M} + B^T \mathbf{S}B$ is a Gram matrix, i.e. integral with even diagonal, and set $$\mathbf{\tilde S} = \begin{pmatrix} \mathbf{S} & \mathbf{S} B \\ B^T \mathbf{S} & 2 \mathcal{M} + B^T \mathbf{S} B \end{pmatrix}.$$

\begin{prop} There is an isomorphism of $M_*$-modules $$\Theta : M_{*-N/2}(\mathbf{\tilde S}) \longrightarrow J_{*,\mathcal{M},B}(\mathbf{S})$$ that sends a modular form $$F(\tau) = \sum_{\gamma \in A(\mathbf{\tilde S})} \sum_{n \in \mathbb{Z}^{e + N} - \tilde Q(\gamma)} c(n,\gamma) q^n \mathfrak{e}_{\gamma}$$ to the Jacobi form $$\Phi(\tau,z) = \sum_{\gamma \in A(\mathbf{S})} \sum_{n \in \mathbb{Z} - Q(\gamma)} \sum_{r \in \mathbb{Z}^{N} - B^T \mathbf{S}\gamma} c\Big(n, (\gamma - \frac{1}{2} B \mathcal{M}^{-1} r, \frac{1}{2} \mathcal{M}^{-1} r)\Big) q^{n + \frac{1}{4} r^T \mathcal{M}^{-1} r} \zeta^r \mathfrak{e}_{\gamma}.$$
\end{prop}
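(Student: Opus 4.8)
The plan is to reduce to the theta decomposition for ordinary Jacobi forms recalled at the start of this section, in two steps: first the case that $B$ is integral, and then the general rational case by clearing the denominators of $B$ with a Hecke operator $U_{\ell}$. Everything hinges on the factorization $\mathbf{\tilde S} = P^T(\mathbf{S} \oplus 2\mathcal{M})P$ with $P = \begin{pmatrix} I & B \\ 0 & I \end{pmatrix}$, which one verifies by multiplying out.

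Suppose first that $B$ is integral. Then $P \in \mathrm{GL}_{e+N}(\mathbb{Z})$, so $P$ is an isometry identifying the lattice $\mathbf{\tilde S}$ with the orthogonal direct sum $\mathbf{S} \oplus 2\mathcal{M}$; in particular $2\mathcal{M} = (2\mathcal{M} + B^T\mathbf{S}B) - B^T\mathbf{S}B$ is then itself a Gram matrix (a difference of Gram matrices, hence integral with even diagonal, and positive-definite since $\mathcal{M}$ is). Moreover $\sigma_B^* = \sigma_0^*$ in this case: the shift $\gamma - B\lambda$ equals $\gamma$ in $A$, and the phases $\mathbf{e}(-\gamma^T\mathbf{S}B\mu)$ and $\mathbf{e}(-\tfrac12\mathrm{tr}(B^T\mathbf{S}B(t - \lambda\mu^T)))$ are trivial because $\mathbf{S}B$ is integral and $B^T\mathbf{S}B$ is even. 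Hence $J_{k,\mathcal{M},B}(\mathbf{S}) = J_{k,\mathcal{M},0}(\mathbf{S})$, and I would simply compose the known $B = 0$ isomorphism $M_{k-N/2}(\mathbf{S} \oplus 2\mathcal{M}) \cong J_{k,\mathcal{M},0}(\mathbf{S})$ with the isomorphism $M_{k-N/2}(\mathbf{\tilde S}) \cong M_{k-N/2}(\mathbf{S} \oplus 2\mathcal{M})$ induced by $P$. The coefficient formula is precisely this transport of structure: the discriminant isometry induced by $P$ sends the claimed index to the index of the $B = 0$ formula, since $P(\gamma - \tfrac12 B\mathcal{M}^{-1}r, \tfrac12\mathcal{M}^{-1}r) = (\gamma, \tfrac12\mathcal{M}^{-1}r)$ and $\mathbf{S}_2^{-1}r = \tfrac12\mathcal{M}^{-1}r$ when $\mathbf{S}_2 = 2\mathcal{M}$.

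For general rational $B$, choose $\ell \in \mathbb{N}$ with $\ell B$ integral and fit the desired map into the square
\[
\begin{CD}
M_{*-N/2}(\mathbf{\tilde S}) @>\Theta>> J_{*,\mathcal{M},B}(\mathbf{S}) \\
@VU_{\ell}VV @VVU_{\ell}V \\
M_{*-N/2}(\mathbf{\tilde S}') @>\Theta>> J_{*,\ell^2\mathcal{M},\ell B}(\mathbf{S})
\end{CD}
\]
whose bottom row is the integral-index decomposition for $\ell B$ just established, whose right arrow is the Hecke operator $U_{\ell}$ of Section 3, and whose left arrow is the modular-forms operator $U_{\ell}^{e,N}$. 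A direct inspection of the definition of $U_{\ell}^{e,N}$ shows that it carries $\mathbf{\tilde S}$ to the Gram matrix $\mathbf{\tilde S}'$ attached to the index $(\ell^2\mathcal{M}, \ell B)$ (it scales the off-diagonal block by $\ell$ and the lower-right block by $\ell^2$). Using that $U_{\ell}$ respects the slash operators, I would check that the square commutes when $\Theta$ on top is defined by the formula in the statement. Since both vertical maps are injective and $M_*$-linear and the bottom map is an isomorphism, the top map is injective and $M_*$-linear; surjectivity follows by matching the images of the two $U_{\ell}$ under the bottom isomorphism — the image of $U_{\ell}^{e,N}$ consists of the pullbacks, whose coefficients are constant on the fibers of the projection $p$, and this corresponds exactly to the characterization of the image of $U_{\ell}$ on Jacobi forms, namely $c(n,r,\gamma) = 0$ unless $\ell^{-1}r \in \mathbb{Z}^N - B^T\mathbf{S}\gamma$.

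Two bookkeeping points underpin the coefficient formula and I would isolate them. Applying $\mathbf{\tilde S} = P^T(\mathbf{S}\oplus 2\mathcal{M})P$ to the index $(\gamma - \tfrac12 B\mathcal{M}^{-1}r, \tfrac12\mathcal{M}^{-1}r)$ gives $(\mathbf{S}\gamma,\, r + B^T\mathbf{S}\gamma)$, which is integral precisely when $r \in \mathbb{Z}^N - B^T\mathbf{S}\gamma$; this shows the index lies in $A(\mathbf{\tilde S})$ exactly under constraint (iii), and the same computation gives $\tilde Q(\gamma - \tfrac12 B\mathcal{M}^{-1}r, \tfrac12\mathcal{M}^{-1}r) = Q(\gamma) + \tfrac14 r^T\mathcal{M}^{-1}r$, so that the total $q$-exponent $n + \tfrac14 r^T\mathcal{M}^{-1}r$ lies in $\mathbb{Z} - Q(\gamma)$ as the transformation under $T$ requires. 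I expect the only genuinely delicate step to be the commutativity of the square together with the exact matching of the two image conditions for $U_{\ell}$; once these are in hand, the proposition follows formally.
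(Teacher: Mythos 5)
Your proposal is correct and takes essentially the same route as the paper: the paper also defines $\Theta$ by clearing denominators with $U_{\ell}$ and demanding that the square through $M_{k-N/2}(\mathbf{S} \oplus 2\ell^2 \mathcal{M})$ and the classical $B=0$ theta decomposition commute — your conjugation by $P = \begin{pmatrix} I & B \\ 0 & I \end{pmatrix}$ is exactly the equivalence the paper invokes for $\ell B$, your well-definedness computation for the index $(\gamma - \frac{1}{2}B\mathcal{M}^{-1}r, \frac{1}{2}\mathcal{M}^{-1}r)$ is verbatim the paper's first step, and your matching of the images of the two $U_{\ell}$'s is the paper's construction of the inverse. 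One small correction to fold in: for integral $B$ the phase $\mathbf{e}(-\gamma^T \mathbf{S} B \mu)$ is trivial because $B$ itself is integral, so that $\gamma^T \mathbf{S} B \mu = (\mathbf{S}\gamma)^T B \mu \in \mathbb{Z}$ — integrality of $\mathbf{S}B$ alone (which holds for every admissible $B$) would not suffice and would wrongly trivialize $\sigma_B^*$ in general; equivalently, just cite the paper's observation that $\sigma_B^*$ depends only on $B$ modulo $\mathbb{Z}^{e \times N}$.
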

\begin{proof} First we should clarify that if $\gamma \in \mathbf{S}^{-1} \mathbb{Z}^e$ and $r \in \mathbb{Z}^N - B^T \mathbf{S} \gamma$, then $$\begin{pmatrix} \mathbf{S} & \mathbf{S} B \\ B^T \mathbf{S} & 2 \mathcal{M} + B^T \mathbf{S} B \end{pmatrix} \begin{pmatrix} \gamma - \frac{1}{2} B \mathcal{M}^{-1} r \\ \frac{1}{2} \mathcal{M}^{-1} r \end{pmatrix} = \begin{pmatrix} \mathbf{S} \gamma \\ B^T \mathbf{S} \gamma + r \end{pmatrix} \in \mathbb{Z}^{e+N}$$ and therefore $$\left(\gamma - \frac{1}{2}B \mathcal{M}^{-1} r, \frac{1}{2} \mathcal{M}^{-1} r\right) \in \mathbf{\tilde S}^{-1} \mathbb{Z}^{e+N};$$ moreover this is a well-defined element of $A(\mathbf{\tilde S})$ when $\gamma$ is interpreted as an element of $A(\mathbf{S})$. \par

Choose $\ell \in \mathbb{N}$ such that $\ell \cdot B \in \mathbb{Z}^{e \times N}$. The Gram matrix $\begin{pmatrix} \mathbf{S} & \ell \mathbf{S} B \\ \ell B^T \mathbf{S} & \ell^2 (2 \mathcal{M} + B^T \mathbf{S} B) \end{pmatrix}$ is equivalent to $\mathbf{S} \oplus 2 \ell^2 \mathcal{M}$ so for every weight $k$ we have an injective map $$M_{k-N / 2}(\mathbf{\tilde S}) \stackrel{U_{\ell}}{\longrightarrow} M_{k-N / 2}(\mathbf{S} \oplus 2 \ell^2 \mathcal{M})\longrightarrow J_{k,\ell^2 \mathcal{M},0}(\mathbf{S}).$$ In terms of Fourier expansions this sends $F(\tau) = \sum_{(\gamma_1,\gamma_2) \in A(\mathbf{\tilde S})} \sum_n c(n,(\gamma_1,\gamma_2)) q^n \mathfrak{e}_{(\gamma_1,\gamma_2)}$ first to the modular form $$U_{\ell} F(\tau) = \sum_{\gamma_1 \in A(\mathbf{S})} \sum_{\gamma_2 \in A(2 \ell^2 \mathcal{M})} \sum_n c(n,(\gamma_1 - \ell B\gamma_2, \ell \gamma_2)) q^n \mathfrak{e}_{(\gamma_1, \gamma_2)}$$ and then to the Jacobi form $$\Phi(\tau,z) = \sum_{\gamma \in A(\mathbf{S})} \sum_{n,r}  c\Big(n,\left(\gamma - \frac{1}{2\ell} B \mathcal{M}^{-1} r , \frac{1}{2\ell} \mathcal{M}^{-1} r\right)\Big) q^{n + \frac{1}{4\ell^2} r^T \mathcal{M}^{-1} r} \zeta^r \mathfrak{e}_{\gamma}.$$ These coefficients vanish unless $$\begin{pmatrix} \mathbf{S} & \mathbf{S}B \\ B^T \mathbf{S} & 2 \mathcal{M} + B^T \mathbf{S} B \end{pmatrix} \begin{pmatrix} \gamma - \frac{1}{2\ell} B \mathcal{M}^{-1} r \\ \frac{1}{2\ell} \mathcal{M}^{-1} r \end{pmatrix} = \begin{pmatrix} \mathbf{S} \gamma \\ B^T \mathbf{S} \gamma + \ell^{-1} r \end{pmatrix}$$ is integral, or equivalently $\ell^{-1} r \in \mathbb{Z}^{N} - B^T \mathbf{S} \gamma$, which is the condition for $\Phi$ to be in the range of $U_{\ell} : J_{k,\mathcal{M},B}(\mathbf{S}) \rightarrow J_{k,\ell^2 \mathcal{M},0}(\mathbf{S})$. Therefore we define the theta decomposition by requiring the diagram

\begin{center}
\begin{tikzpicture}[node distance=2cm, auto]
\node (A) {$M_{k-N/2}(\mathbf{\tilde S})$};
\node(AA) [right of = A] {$ $};
\node (B) [right of = AA] {$M_{k-N/2}(\mathbf{S} \oplus 2 \ell^2 \mathcal{M})$};
\node (C) [below of = A] {$J_{k,\mathcal{M},B}(\mathbf{S})$};
\node (D) [below of = B] {$J_{k,\ell^2 \mathcal{M},0}(\mathbf{S})$};
\draw[dashed,->] (A) to node {$ $} (C);
\draw[->] (A) to node {$U_{\ell}$} (B);
\draw[->] (C) to node {$U_{\ell}$} (D);
\draw[->] (B) to node {$ $} (D);
\end{tikzpicture}
\end{center} to commute. Its inverse can be constructed in the same way: starting with a Jacobi form, apply $U_{\ell}$ and the usual theta decomposition, then identify the result in the image of $U_{\ell}$ on modular forms. That this is independent of $\ell$ can be seen from the coefficient formula for $\Phi$ in the claim.
\end{proof}

\section{Eisenstein and Poincar\'e series}

Some of the simplest Jacobi forms that can be constructed are the Poincar\'e series: $$\mathcal{P}_{k,(\mathcal{M},B),(n,r,\gamma)}(\tau) = \sum_{(\lambda,0,0,M) \in \mathcal{J}_{N,\infty} \backslash \mathcal{J}_{N}} q^n \zeta^r \mathfrak{e}_{\gamma} \Big|_{k,\mathcal{M},B} (\zeta,M),$$ where $(\lambda,0,0,M)$ runs through cosets of the Jacobi group $\mathcal{J}_{N}$ by the subgroup generated by all $(0,\mu,t) \in \mathcal{H}_{N}$ and by $T = (\begin{pmatrix} 1 & 1 \\ 0 & 1 \end{pmatrix}, 1)$ and $Z = (-I,i)$ from $Mp_2(\mathbb{Z})$. The series $P_{n,r,\gamma}(\tau)$ is well-defined as long as $n \in \mathbb{Z} - Q(\gamma)$ and $r \in \mathbb{Z}^{N} - B^T \mathbf{S}\gamma$ and $k > 2 + N/2$ and it always transforms like a Jacobi form by construction. The vanishing condition on Fourier coefficients is satisfied when $4n - r^T \mathcal{M}^{-1} r \ge 0$. If $n=r=0$ then we call $\mathcal{E}_{k,\mathcal{M},\gamma}(\tau) = \mathcal{P}_{k,\mathcal{M},0,0,\gamma}(\tau)$ the \textbf{Eisenstein series}. See also \cite{M2} which considers the scalar-valued case in more detail. \par \

The point of this section is to show that the Jacobi Eisenstein and Poincar\'e series as above correspond to the usual Eisenstein and Poincar\'e series as in chapter $1$ of \cite{Br}: $$P_{k,n,\gamma}(\tau) = \sum_{M \in \Gamma_{\infty} \backslash \Gamma} q^n \mathfrak{e}_{\gamma} \Big|_{k,\rho^*} M;$$ i.e. any coefficient formula for Jacobi Eisenstein and Poincar\'e series even in the generality considered here (which includes the author's computations in \cite{W}) will reduce to theorem 1.3 of \cite{Br}. \par \

\begin{prop} In the notation of the previous section, the theta decomposition satisfies $$\Theta \Big( P_{k-N / 2, n, (\gamma - \frac{1}{2} B \mathcal{M}^{-1} r, \frac{1}{2} \mathcal{M}^{-1} r)} \Big) = \mathcal{P}_{k,(\mathcal{M},B),(n + r^T \mathcal{M}^{-1} r / 4, r, \gamma)}.$$
\end{prop}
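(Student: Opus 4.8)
The plan is to use that $\Theta$ is, by its construction in the previous proposition, an $Mp_2(\mathbb{Z})$-equivariant weight-raising map, and to recognize both Poincar\'e series as averages of a single seed over essentially the same group. Write $\gamma' = (\gamma - \tfrac12 B\mathcal{M}^{-1}r, \tfrac12\mathcal{M}^{-1}r)$ and $\tilde n = n + r^T\mathcal{M}^{-1}r/4$. The vector-valued series is the $\Gamma_\infty\backslash\Gamma$-average of the seed $q^n\mathfrak{e}_{\gamma'}$, while the Jacobi series is the $\mathcal{J}_{N,\infty}\backslash\mathcal{J}_N$-average of $q^{\tilde n}\zeta^r\mathfrak{e}_\gamma$. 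The first step is to record that $\mathcal{J}_{N,\infty}\backslash\mathcal{J}_N$ splits as $\{(\lambda,0,0):\lambda\in\mathbb{Z}^N\}\times(\Gamma_\infty\backslash\Gamma)$, so the Jacobi average factors as an inner sum over the Heisenberg translations $(\lambda,0,0)$ followed by the outer $\Gamma_\infty\backslash\Gamma$-average, the slash splitting correspondingly as $\cdot\,|(\lambda,0,0)$ then $\cdot\,|M$.

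The argument then rests on two ingredients. First (equivariance), $\Theta$ extends --- by the coefficient formula of the previous proposition, or equivalently as the composite $(U_\ell)^{-1}\circ\Theta_{\mathrm{std}}\circ U_\ell$ of the previous proof --- to a map on holomorphic functions $\mathbb{H}\to\mathbb{C}[A(\mathbf{\tilde S})]$ satisfying $\Theta(f|_{k-N/2,\rho^*}M) = \Theta(f)|_{k,\mathcal{M},B}M$ for all $M\in Mp_2(\mathbb{Z})$. This follows from the slash-compatibility of the Hecke operators (the remark of Section 3) together with the equivariance of the integral-index theta decomposition $\Theta_{\mathrm{std}}$, which is multiplication by the Siegel theta function of $2\ell^2\mathcal{M}$ followed by a trace and hence carries the elliptic automorphy factor and the weight shift $N/2$. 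Second (seed), applying $\Theta$ to the single seed reproduces exactly the inner Heisenberg average,
\begin{equation*}
\Theta(q^n\mathfrak{e}_{\gamma'}) = \sum_{\lambda\in\mathbb{Z}^N}(q^{\tilde n}\zeta^r\mathfrak{e}_\gamma)\big|_{k,\mathcal{M},B}(\lambda,0,0).
\end{equation*}

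Granting these, the proof is an interchange of summation justified by absolute convergence for $k > 2 + N/2$: moving $\Theta$ inside the outer $\Gamma_\infty\backslash\Gamma$-sum and applying the two ingredients termwise,
\begin{align*}
\Theta\big(P_{k-N/2,\,n,\,\gamma'}\big) &= \sum_{M\in\Gamma_\infty\backslash\Gamma}\Theta(q^n\mathfrak{e}_{\gamma'})\big|_{k,\mathcal{M},B}M\\
&= \sum_{M\in\Gamma_\infty\backslash\Gamma}\Big(\sum_{\lambda\in\mathbb{Z}^N}(q^{\tilde n}\zeta^r\mathfrak{e}_\gamma)\big|(\lambda,0,0)\Big)\Big|_{k,\mathcal{M},B}M = \mathcal{P}_{k,(\mathcal{M},B),(\tilde n,\,r,\,\gamma)},
\end{align*}
which is the claim.

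The main work is the seed identity. Here one computes $(q^{\tilde n}\zeta^r\mathfrak{e}_\gamma)|(\lambda,0,0)$ directly from the Heisenberg transformation law --- it equals $q^{\,n + r^T\mathcal{M}^{-1}r/4 + r^T\lambda + \lambda^T\mathcal{M}\lambda}\,\zeta^{\,r+2\mathcal{M}\lambda}\,\mathfrak{e}_{\gamma+B\lambda}$, with trivial phase because $\sigma_B^*(\lambda,0,0)$ permutes the $\mathfrak{e}_\gamma$ without any cocycle --- and must match this against the Fourier support cut out by the coefficient formula for $\Theta(q^n\mathfrak{e}_{\gamma'})$. The content is that the assignment $\lambda\mapsto(r+2\mathcal{M}\lambda,\ \gamma+B\lambda)$ is a bijection from $\mathbb{Z}^N$ onto the set of indices $(r'',\gamma_0)$ allowed by the formula (namely $\tfrac12\mathcal{M}^{-1}(r''-r)\in\mathbb{Z}^N$ and $\gamma_0\equiv\gamma+\tfrac12 B\mathcal{M}^{-1}(r''-r)$ in $A(\mathbf{S})$), with matching quadratic shift in the $q$-exponent and coefficient $1$ throughout; this is precisely where the nonintegrality of $B$ and the scaling hidden in $\Theta$ must be seen to cancel. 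Once this bijection is checked, the equivariance and convergence steps are routine.
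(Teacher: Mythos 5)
Your proof is correct, and it reorganizes the paper's argument rather than reproducing it. The paper proves the claim first for $B=0$, where $\Theta$ is realized as $F\mapsto\pi(F\otimes\Theta_{2\mathcal{M}})$: the automorphy chain shows this commutes with the slash termwise in the Poincar\'e average, and the substitution $\mu=r+2\mathcal{M}\lambda$ unfolds the seed into the inner Heisenberg average --- exactly your equivariance-plus-seed structure, but only at integral index. For general $B$ the paper does \emph{not} extend $\Theta$ to non-modular functions; instead it computes $U_\ell$ of both Poincar\'e series explicitly as finite sums over $\mu\in\mathbb{Z}^N/\ell\mathbb{Z}^N$ of Poincar\'e series of $B=0$ type, matches these term by term, and concludes from the defining commutative square and injectivity of $U_\ell$. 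You push the $U_\ell$-reduction into an equivariance lemma and do the seed computation directly at general $B$ via the bijection $\lambda\mapsto(r+2\mathcal{M}\lambda,\gamma+B\lambda)$, which spares you the somewhat messy decompositions $U_\ell\mathcal{P}=\sum_\mu\mathcal{P}_{\dots}$ and their modular analogues. Two points in your equivariance step deserve to be written out. First, the coefficient formula of Proposition 7 alone cannot define $\Theta(f|M)$, since $f|M$ for a single seed is not a Fourier series of the required shape; it is your alternative description $U_{1/\ell}\circ\Theta_{\mathrm{std}}\circ U_\ell$, taken pointwise in $\tau$ (with $U_\ell$ on the modular side the group-ring map $\mathfrak{e}_\gamma\mapsto\sum_{p(\delta)=\gamma}\mathfrak{e}_\delta$ applied pointwise), that makes the statement meaningful, and the agreement of the two descriptions on Fourier expansions is precisely the computation in the proof of Proposition 7. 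Second, your ``hence'' conceals the intertwining property $\pi\circ\bigl(\rho^*_{\mathbf{S}\oplus 2\ell^2\mathcal{M}}\otimes\rho_{2\ell^2\mathcal{M}}\bigr)(M)=\rho^*_{\mathbf{S}}(M)\circ\pi$ of the trace map, which the paper checks on the generators $S,T$; multiplication by the theta function alone does not produce the representation $\rho^*_{\mathbf{S}}$ without it. With those two checks made explicit, your seed identity (trivial cocycle of $\sigma_B^*(\lambda,0,0)$, exponent shift $r^T\lambda+\lambda^T\mathcal{M}\lambda$, and the support condition $r''\in\mathbb{Z}^N-B^T\mathbf{S}\gamma_0$ holding automatically because $2\mathcal{M}+B^T\mathbf{S}B$ is integral) matches Proposition 7 exactly, and the interchange of summation for $k>2+N/2$ is routine.
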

Poincar\'e series extract Fourier coefficients with respect to an appropriately defined Petersson scalar product so this correspondence is suggested by the formula in proposition 7; one could give a proof along these lines but we use a more direct argument here.
\begin{proof} Suppose first that $B=0$ so the theta decomposition maps $F \in M_{k-N/2}(\mathbf{S} \oplus 2\mathcal{M})$ to $\pi(F \otimes \Theta)$ where $\Theta(\tau,z) = \sum_{\lambda \in \mathbb{Z}^{N}} q^{\lambda^T \mathcal{M}^{-1} \lambda / 4} \zeta^{\lambda} \mathfrak{e}_{(2\mathcal{M})^{-1} \lambda}$ and where $\pi$ is the linear map $$\pi : \mathbb{C}[A(\mathbf{S})] \otimes \mathbb{C}[A(2\mathcal{M})] \otimes \mathbb{C}[A(2\mathcal{M})] \rightarrow \mathbb{C}[A(\mathbf{S})], \quad \pi(\mathfrak{e}_{\beta} \otimes \mathfrak{e}_{\gamma} \otimes \mathfrak{e}_{\delta}) = \begin{cases} \mathfrak{e}_{\beta}: & \gamma = \delta; \\ 0: & \text{otherwise}. \end{cases}$$ Here $\Theta$ is a Jacobi form of weight $N / 2$ and index $\mathcal{M}$ for the (non-dual) Weil representation $\rho_{2\mathcal{M}}$ by Poisson summation. One can check that $\pi$ satisfies $\pi \circ (\rho_{\mathbf{S} \oplus 2 \mathcal{M}}^* \otimes \rho_{2\mathcal{M}}(M)) = \rho_{\mathbf{S}}^*(M) \circ \pi$ for all $M \in Mp_2(\mathbb{Z})$ on the generators $M = S,T$. It follows that if $F(\tau) = \sum_{M \in \Gamma_{\infty} \backslash \Gamma} \phi(\tau) \Big|_{k - N / 2, \rho_{\mathbf{S} \otimes 2 \mathcal{M}}^*} M$ for some seed function $\phi$ then 

\begin{align*} &\quad \sum_{M \in \Gamma_{\infty} \backslash \Gamma} \pi\Big( \phi(\tau) \otimes \Theta(\tau,z) \Big) \Big|_{k,\mathcal{M}, \rho_{\mathbf{S}}^*} M \\ &= \pi\sum_{M \in \Gamma_{\infty} \backslash \Gamma} \Big(\phi(\tau) \otimes \Theta(\tau,z)\Big) \Big|_{k,\mathcal{M}, \rho_{\mathbf{S} \oplus 2 \mathcal{M}}^* \otimes \rho_{2\mathcal{M}}} M \\ &= \pi \sum_{c,d} (c \tau + d)^{-k+N/2} \rho_{\mathbf{S} \oplus 2 \mathcal{M}}^*(M)^{-1} \phi(M \cdot \tau) \otimes (c \tau + d)^{-N/2} \mathbf{e}\Big(-\frac{c}{c\tau + d} z^T \mathcal{M} z \Big) \rho_{2\mathcal{M}}(M)^{-1} \Theta\left(\frac{a\tau + b}{c \tau + d},\frac{z}{c\tau + d}\right) \\ &= \pi \sum_{c,d} (c \tau + d)^{-k+N/2} \rho_{\mathbf{S} \oplus 2 \mathcal{M}}^*(M)^{-1} \phi(M \cdot \tau) \otimes \Theta(\tau,z) \\ &= \pi \Big(F(\tau) \otimes \Theta(\tau,z)\Big) \end{align*} is the Jacobi form corresponding to $F$. When $\phi(\tau) = q^n \mathfrak{e}_{\gamma} \otimes \mathfrak{e}_{(2 \mathcal{M})^{-1} r}$ for some $\gamma \in A(\mathbf{S})$ then 

\begin{align*} \pi\Big( \phi(\tau) \otimes \Theta(\tau,z) \Big) &= \sum_{\mu \in r + 2 \mathcal{M} \mathbb{Z}^{N}} q^{n + \mu^T \mathcal{M}^{-1} \mu / 4} \zeta^{\mu} \mathfrak{e}_{\gamma} \\ &= \sum_{\lambda \in \mathbb{Z}^{N}} \mathbf{e}\Big( \tau \lambda^T \mathcal{M} \lambda + 2\lambda^T \mathcal{M} z \Big) q^{n + r^T \mathcal{M}^{-1} r / 4 + r^T \lambda} \zeta^r \mathfrak{e}_{\gamma} \quad (\mu = r + 2 \mathcal{M} \lambda) \\ &= \sum_{\lambda \in \mathbb{Z}^{N}} q^{n + r^T \mathcal{M}^{-1} r / 4} \zeta^r \mathfrak{e}_{\gamma} \Big|_{k,\mathcal{M}} (\lambda,0,0)\end{align*} and therefore \begin{align*} \pi \Big( P_{k-N/2,n,(\gamma,(2 \mathcal{M})^{-1} r)}(\tau) \otimes \Theta(\tau,z) \Big) &= \sum_{M \in \Gamma_{\infty} \backslash \Gamma} \pi\Big( \phi(\tau) \otimes \Theta(\tau,z) \Big) \Big|_{k,\mathcal{M}, \rho_{\mathbf{S}}^*} M \\ &= \sum_{(\lambda,0,0,M) \in \mathcal{J}_{\infty} \backslash \mathcal{J}} \Big( q^{n + r^T \mathcal{M}^{-1} r / 4} \zeta^r \mathfrak{e}_{\gamma} \Big) \Big|_{k,\mathcal{M}} (\lambda,0,0) \Big|_{k,\mathcal{M},\rho_{\mathbf{S}}^*} M \\ &= \mathcal{P}_{k,\mathcal{M},(n+r^T \mathcal{M}^{-1} r / 4, r, \gamma)}(\tau,z). \end{align*}

For the general case of theta decomposition in the Gram matrix $\mathbf{\tilde S} = \begin{pmatrix} \mathbf{S} & \mathbf{S} B \\ B^T \mathbf{S} & 2 \mathcal{M} + B^T \mathbf{S} B \end{pmatrix}$ we need to check that the images of $P_{k-N/2,n,(\gamma-\frac{1}{2}B \mathcal{M}^{-1} r, \frac{1}{2} \mathcal{M}^{-1} r)}$ and $\mathcal{P}_{k,(\mathcal{M},B),(n + r^T \mathcal{M}^{-1} r / 4, r, \gamma)}$ match up correctly under $U_{\ell}$. On Jacobi forms we find \begin{align*} &\quad U_{\ell} \mathcal{P}_{k,(\mathcal{M},B),(n + r^T \mathcal{M}^{-1} r/4,r,\gamma)} \\ &= U_{\ell} \sum_{M \in \Gamma_{\infty} \backslash \Gamma} \sum_{\lambda \in \mathbb{Z}^N} (q^{n + r^T \mathcal{M}^{-1} r / 4} \zeta^r \mathfrak{e}_{\gamma}) \Big|_{k,\mathcal{M},B} (\lambda,0,0) \Big|_{k,\mathcal{M},B,\rho_{\mathbf{S}}^*} M \\ &= \sum_{M \in \Gamma_{\infty} \backslash \Gamma} \sum_{\lambda \in \mathbb{Z}^N} \Big[ U_{\ell} \Big( \sum_{\mu \in \mathbb{Z}^N / \ell \mathbb{Z}^N} q^{n + r^T \mathcal{M}^{-1} r / 4} \zeta^r \mathfrak{e}_{\gamma} \Big|_{k,\mathcal{M},B} (\mu,0,0) \Big) \Big] \Big|_{k,\ell^2\mathcal{M},\ell B} (\lambda,0,0) \Big|_{k,\ell^2 \mathcal{M},\ell B,\rho_{\mathbf{S}}^*} M \\ &= \sum_{\mu \in \mathbb{Z}^N / \ell \mathbb{Z}^N} \mathcal{P}_{k, \ell^2 \mathcal{M}, (n + (r + 2 \mathcal{M}\mu)^T \mathcal{M}^{-1}(r + 2 \mathcal{M}\mu)/4, \ell r + 2\ell \mathcal{M}\mu, \gamma + B \mu)}, \end{align*} while on modular forms, abbreviating $\tilde \gamma = (\gamma - \frac{1}{2}B \mathcal{M}^{-1} r, \frac{1}{2} \mathcal{M}^{-1} r)$ and applying $\uparrow$ termwise gives \begin{align*} U_{\ell} P_{k-N/2,n,\tilde \gamma} &= U_{\ell} \Big[ \sum_{M \in \Gamma_{\infty} \backslash \Gamma} q^n \mathfrak{e}_{\tilde \gamma} \Big|_{k,\rho_{\mathbf{\tilde S}}^*} M \Big] \\ &= \sum_{M \in \Gamma_{\infty} \backslash \Gamma} \Big( q^n \sum_{\substack{\delta \in A(\mathbf{S} \oplus 2 \ell^2 \mathcal{M}) \\ p(\delta) = \tilde \gamma}} \mathfrak{e}_{\delta} \Big) \Big|_{k,\rho_{\mathbf{S} \oplus 2 \ell^2 \mathcal{M}}^*} M \\ &= \sum_{\mu \in \mathbb{Z}^N / \ell \mathbb{Z}^N} P_{k-N/2,n,(\gamma - \frac{1}{2} \ell^{-1} \mathcal{M}^{-1} r, \frac{1}{2}\ell^{-1} \mathcal{M}^{-1} r + \ell^{-1} \mu)}, \end{align*} which correspond as they are expected to.
\end{proof}

\section{Kohnen plus space}

Since Jacobi forms of very small index tend to be determined completely by their value at $z=0$, applying the theta decomposition and setting $z=0$ in the resulting Jacobi forms gives isomorphisms between modular forms of apparently unrelated levels. The simplest example is an identification of the Kohnen plus spaces of half-integral weight modular forms of level $4$ with modular forms of level $3$ and odd integral weight. \par \

Let $k \in \mathbb{N}_0$. The Kohnen plus space of weight $k + 1/2$ is the subspace $M_{k+1/2}^+(\Gamma_0(4))$ of modular forms of level $\Gamma_0(4)$ in which the Fourier coefficient $a_n$ is zero unless $(-1)^k n \equiv 0,1$ mod $4$. This was introduced by Kohnen in \cite{K} (also for more general levels) as a natural setting for the Shimura correspondence: in particular there are Hecke-equivariant isomorphisms $M_{k+1/2}^+ \cong M_{2k}$ to classical modular forms of integral weight and level $1$. \par \

\begin{prop} For every odd integer $k$, there is an isomorphism $$M_k(\Gamma_1(3)) = M_k(\Gamma_0(3),\chi) \cong M_{k-1/2}^+(\Gamma_0(4)) \times M_{k+1/2}^+(\Gamma_0(4)) .$$ Here $\chi$ is the nontrivial Nebentypus modulo $3$. More precisely, there are isomorphisms $$M_k^+(\Gamma_0(3),\chi) \cong M_{k-1/2}^+(\Gamma_0(4)), \quad M_k^-(\Gamma_0(3),\chi) \cong M_{k+1/2}^+(\Gamma_0(4)),$$ where $M_k^+(\Gamma_0(3),\chi)$ is the subspace of forms whose coefficients $a_n$ vanish when $n \equiv 2$ mod $3$; and $M_k^-(\Gamma_0(3),\chi)$ is the subspace of forms whose coefficients vanish when $n \equiv 1$ mod $3$.
\end{prop}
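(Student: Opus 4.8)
The plan is to identify both sides with spaces of vector-valued modular forms and to pass between them with the theta decomposition of proposition 7, evaluated at $z=0$. On the level-three side I would use the dictionary for lattices of prime discriminant (\cite{BB}): for $p=3$ the relevant discriminant form is $\mathbb{Z}/3\mathbb{Z}$ with $Q(\gamma)=\gamma^2/3$, realized by the root lattice $A_2$ with Gram matrix $\begin{pmatrix} 2 & 1 \\ 1 & 2 \end{pmatrix}$, and $M_k(\rho^*_{A_2})$ is isomorphic to the subspace $M_k^-(\Gamma_0(3),\chi)$ of scalar forms with $a_n=0$ for $n\equiv 1\pmod 3$ (tracking the exponents $n\in\mathbb{Z}-Q(\gamma)$ modulo $1$ and rescaling by $3$). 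The opposite square class, realized by $-A_2$, gives $M_k(\rho^*_{-A_2})\cong M_k^+(\Gamma_0(3),\chi)$. Since a level-three form supported on multiples of $3$ descends to an odd-weight form of level one and hence vanishes, $M_k^+\cap M_k^-=0$ and $M_k(\Gamma_1(3))=M_k^+\oplus M_k^-$. On the half-integral side I would use the standard identification of the Kohnen plus space with vector-valued forms for a rank-one lattice (\cite{EZ},\cite{K}): comparing supports modulo $4$ gives $M_{k+1/2}^+(\Gamma_0(4))\cong M_{k+1/2}(\rho^*_{\langle 2\rangle})$ and $M_{k-1/2}^+(\Gamma_0(4))\cong M_{k-1/2}(\rho^*_{\langle -2\rangle})$.

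For the isomorphism $M_k^-(\Gamma_0(3),\chi)\cong M_{k+1/2}^+(\Gamma_0(4))$ I would read the Gram matrix $A_2$ as the matrix $\mathbf{\tilde S}$ of proposition 7. Matching blocks forces $\mathbf{S}=(2)$, then $\mathbf{S}B=(1)$ so $B=\tfrac12$, and $2\mathcal{M}+B^T\mathbf{S}B=2$ so $\mathcal{M}=\tfrac34$, with $N=1$. Proposition 7 then gives $M_k(\rho^*_{A_2})\cong J_{k+1/2,3/4,1/2}(\langle 2\rangle)$, and composing with the evaluation $\Phi\mapsto\Phi(\tau,0)$ produces a form in $M_{k+1/2}(\rho^*_{\langle 2\rangle})$, because the elliptic transformation law degenerates at $z=0$ into the modular transformation law for $\langle 2\rangle$. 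The weight has gone up by $N/2=\tfrac12$, and setting $\zeta^r=1$ in the coefficient formula of proposition 7 shows that the resulting $q$-exponents meet exactly the plus-space condition modulo $4$.

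The companion isomorphism $M_k^+(\Gamma_0(3),\chi)\cong M_{k-1/2}^+(\Gamma_0(4))$ is the same construction for the opposite square class. Here $\mathbf{\tilde S}=-A_2$ and $\mathbf{S}=\langle -2\rangle$, and the definite block that is split off is now negative (one finds $\mathcal{M}=-\tfrac34$), so I would apply the mirror of proposition 7 for the non-dual Weil representation $\rho_{A_2}$, in which tensoring with the conjugate theta function lowers the weight by $N/2$; this is what turns $k$ into $k-\tfrac12$ and is consistent with the sign of the rank-one lattice. Assembling the two cases over the splitting $M_k(\Gamma_1(3))=M_k^+\oplus M_k^-$ then yields the product isomorphism in the statement.

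The step I expect to be the main obstacle is showing that $\Phi\mapsto\Phi(\tau,0)$ is not merely a map into the plus space but an isomorphism onto it. Injectivity is the assertion that a Jacobi form of the very small index $\mathcal{M}=\tfrac34$ is determined by its restriction to $z=0$: concretely the coefficient of $q^N\mathfrak{e}_\gamma$ in $\Phi(\tau,0)$ is the shell sum $\sum_{r} c\big(N-\tfrac14 r^T\mathcal{M}^{-1}r,\,\delta(r)\big)$ of Fourier coefficients of the underlying $A_2$-form, where $\delta(r)=(\gamma-\tfrac12 B\mathcal{M}^{-1}r,\tfrac12\mathcal{M}^{-1}r)$, and I would prove that this linear system is invertible by ordering the coefficients by the value of $\tilde Q$ and checking that the system is triangular, the off-diagonal terms coming only from strictly larger norms. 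Surjectivity, together with the fact that the image is cut out by precisely the Kohnen condition, then follows by comparing the dimensions of the two scalar spaces (equivalently, by exhibiting the inverse as $\Theta^{-1}$ of the Jacobi form reconstructed from its theta components). The residue bookkeeping modulo $3$ and modulo $4$ is routine once the parameters above are fixed.
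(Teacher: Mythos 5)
Your first isomorphism $M_k^-(\Gamma_0(3),\chi)\cong M_{k+1/2}^+(\Gamma_0(4))$ follows the paper exactly: Bruinier--Bundschuh on both sides, then proposition 7 with $\mathbf{\tilde S}=\begin{pmatrix}2&1\\1&2\end{pmatrix}$, $\mathbf{S}=(2)$, $B=\tfrac12$, $\mathcal{M}=\tfrac34$, then evaluation at $z=0$. But your second isomorphism has a genuine gap. You propose to read $-A_2$ as $\mathbf{\tilde S}$ over $\mathbf{S}=(-2)$, arrive at $\mathcal{M}=-\tfrac34$, and invoke a ``mirror of proposition 7'' in which tensoring with the conjugate theta function \emph{lowers} the weight by $N/2$. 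No such mirror exists within holomorphic forms: proposition 7 requires $\mathcal{M}$ positive definite precisely because the theta function $\Theta(\tau,z)=\sum_\lambda q^{\lambda^T\mathcal{M}\lambda}\zeta^{2\mathcal{M}\lambda}$ must be holomorphic, and for negative-definite index the $q$-exponents tend to $-\infty$ (nonzero holomorphic Jacobi forms of negative index do not exist). Tensoring with the \emph{conjugate} theta function leaves the holomorphic category entirely --- this is the setting of skew-holomorphic Jacobi forms, and setting $z=0$ there does not produce a holomorphic half-integral weight form; recovering one would require something like holomorphic projection, which you have not supplied and which would not obviously respect these spaces. If instead you run the correct non-dual analogue of proposition 7 (using $\rho^*_{-A_2}=\rho_{A_2}$), the weight still goes \emph{up} by $\tfrac12$ and you land in $M_{k+1/2}$ for the lattice $(-2)$, which for odd $k$ is an antisymmetric weight on a discriminant group where every element is its own negative --- so that target space is zero and the construction collapses.

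The paper's way around this obstruction is worth noting, because it is essentially forced: since no positive-definite rank-one piece can be split off the negative-definite $-A_2$, the map must run in the \emph{opposite} direction. The paper realizes $M_{k-1/2}^+(\Gamma_0(4))$ (via Bruinier--Bundschuh, determinant $2$) as $M_{k-1/2}$ of the \emph{indefinite} Gram matrix $\begin{pmatrix}-2&-1&1\\-1&-2&0\\1&0&0\end{pmatrix}$, which has the block shape of $\mathbf{\tilde S}$ over $\mathbf{S}=-A_2$ with $B=(-\tfrac23,\tfrac13)$ and $\mathcal{M}=\tfrac13>0$; the same proposition 7 then gives $M_{k-1/2}^+(\Gamma_0(4))\cong J_{k,1/3,(-2/3,1/3)}(-A_2)\to M_k(-A_2)\cong M_k^+(\Gamma_0(3),\chi)$, again raising the weight by $\tfrac12$. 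Two smaller points: your triangular-system sketch for injectivity is plausible for index $\tfrac13$ but for index $\tfrac34$ evaluation at $z=0$ alone need not be triangular; the paper instead quotes theorem 1.2 of Eichler--Zagier (index $<\tfrac12$, or index $<1$ together with vanishing of $\frac{d}{dz}\big|_{z=0}\Phi$, which vanishes automatically here for antisymmetric-weight reasons), reducing rational index to integral index by tensoring $\Phi$ with itself. And your alternative surjectivity argument (``exhibiting the inverse as $\Theta^{-1}$ of the Jacobi form reconstructed from its theta components'') is circular --- reconstructing the Jacobi form from the scalar form \emph{is} the inverse you are trying to build; your dimension-count fallback is the paper's actual argument (low weights plus the increase by $2$ under $k\mapsto k+12$).
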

These isomorphisms are not Hecke-equivariant but they are simple to express in terms of the coefficients, and more generally they are isomorphisms of graded modules over the ring $M_*$ of classical modular forms of level one; see the remark below.
\begin{proof} Using a more general result of Bruinier and Bundschuh \cite{BB}, all of the spaces here can be interpreted as spaces of vector-valued modular forms: $$M_k^+(\Gamma_0(3),\chi) \cong M_k \Big( \begin{pmatrix} -2 & -1 \\ -1 & -2 \end{pmatrix} \Big), \quad M_k^-(\Gamma_0(3),\chi) \cong M_k\Big( \begin{pmatrix} 2 & 1 \\ 1 & 2 \end{pmatrix} \Big),$$ $$M_{k-1/2}^+(\Gamma_0(4)) \cong M_{k-1/2}\Big( \begin{pmatrix} -2 & -1 &1 \\ -1 & -2 & 0 \\ 1 & 0 & 0 \end{pmatrix} \Big), \quad M_{k+1/2}^+(\Gamma_0(4)) \cong M_{k+1/2}((2)),$$ since all of the Gram matrices above have prime determinant. These identifications are essentially given by summing all components; such that for example the weight three Eisenstein series $$E_3(\tau) = (1 - 90q - 216q^2 - ...) \mathfrak{e}_{(0,0)} + (-9q^{1/3} - 117q^{4/3} - ...) (\mathfrak{e}_{(1/3,1/3)} + \mathfrak{e}_{(2/3,2/3)}) \in M_3\Big( \begin{pmatrix} -2 & -1 \\ -1 & -2 \end{pmatrix} \Big)$$ and $$E_3(\tau) = (1 + 72q + 270q^2 + ...) \mathfrak{e}_{(0,0)} + (27q^{2/3} + 216q^{5/3} + ...) (\mathfrak{e}_{(1/3,1/3)} + \mathfrak{e}_{(2/3,2/3)}) \in M_3 \Big( \begin{pmatrix} 2 & 1 \\ 1 & 2 \end{pmatrix} \Big)$$ correspond to the scalar modular forms $$1 - 18q - 90q^3 - 234q^4 - 216q^6 - ... \in M_3(\Gamma_0(3),\chi)^+$$ and $$1 + 54q^2 + 72q^3 + 432q^5 + 270q^6 + ... \in M_3(\Gamma_0(3),\chi)^-,$$ respectively. The theta decomposition gives identifications $$M_k\Big( \begin{pmatrix} 2 & 1 \\ 1 & 2 \end{pmatrix} \Big) \cong J_{k+1/2,3/4,(1/2)}((2))$$ and $$M_{k-1/2}\Big( \begin{pmatrix} -2 & -1 & 1 \\ -1 & -2 & 0 \\ 1 & 0 & 0 \end{pmatrix}\Big) \cong J_{k,1/3,(-2/3,1/3)} \Big( \begin{pmatrix} -2 & -1 \\ -1 & -2 \end{pmatrix} \Big)$$ and setting $z=0$ in the resulting Jacobi forms gives the maps $$M_k\Big( \begin{pmatrix}2 & 1 \\ 1 & 2 \end{pmatrix} \Big) \rightarrow M_{k+1/2}((2)), \; M_{k-1/2}\Big( \begin{pmatrix} -2 & -1 & 1 \\ -1 & -2 & 0 \\ 1 & 0 & 0 \end{pmatrix} \Big) \rightarrow M_k\Big( \begin{pmatrix} -2 & -1 \\ -1 & -2 \end{pmatrix} \Big)$$ in the claim. Both maps are injective: one can use theorem 1.2 of \cite{EZ} to see that any Jacobi form $\Phi$ of index strictly less than one half for which $\Phi(\tau,0)$ vanishes, or of index less than one for which $\Phi(\tau,0)$ and $\frac{d}{dz}\Big|_{z=0} \Phi(\tau,z)$ both vanish, must be identically zero. (In the second case, $\frac{d}{dz}\Big|_{z=0} \Phi(\tau,z)$ is a modular form of antisymmetric weight for the Gram matrix $(2)$ so it vanishes identically regardless of $\Phi$.) Theorem 1.2 of \cite{EZ} considers only integer-index Jacobi forms but we can reduce to that case by tensoring $\Phi$ with itself enough times to produce something of integer index. \par \

To see that these are isomorphisms we compare the dimensions of both sides. One way to show that the dimensions are the same is to check the first few weights and then use the fact that for $k \ge 3$, the dimensions in all four cases increase by $2$ when we replace $k$ by $k + 12$ (since $2$ is the number of elements $\gamma$ in each discriminant group up to equivalence $\gamma \sim -\gamma$).
\end{proof}

\begin{rem}
Unraveling this argument in terms of scalar-valued modular forms, the isomorphisms above are $$M_k^-(\Gamma_1(3)) \rightarrow M_{k+1/2}^+(\Gamma_0(4)), \quad f(\tau) \mapsto \frac{1}{18} \begin{pmatrix} f(\frac{4\tau}{3}) & f(\frac{4 \tau + 1}{3}) & f(\frac{4 \tau + 2}{3}) \end{pmatrix} \begin{pmatrix} 4 & 1 & 1 \\ 1 & 4 & 1 \\ 1 & 1 & 4 \end{pmatrix} \begin{pmatrix} \vartheta(\frac{\tau}{3}) \\ \vartheta(\frac{\tau + 1}{3}) \\ \vartheta(\frac{\tau + 2}{3}) \end{pmatrix}$$ and $$M_{k-1/2}^+(\Gamma_0(4)) \rightarrow M_k^+(\Gamma_1(3)),$$ $$f(\tau) \mapsto \frac{1}{4} \sum_{a \in \mathbb{Z}/4\mathbb{Z}} f\Big( \frac{3 \tau + a}{4} \Big) \vartheta\Big( \frac{\tau - a}{4} \Big),$$ where $\vartheta(\tau) = 1 + 2q + 2q^4 + 2q^9 + ...$ One could probably prove directly that these are well-defined isomorphisms but using the argument above this is more or less automatic.
\end{rem}

\begin{ex} The first part of remark 10 means that to compute the Kohnen plus form corresponding to $f(\tau) = \sum_n a_n q^n \in M_k^-(\Gamma_1(3))$, we divide all coefficients $a_n$, $n \not \equiv 0 \, (3)$ by two, change variables $q \mapsto q^{4/3}$, multiply by $\vartheta(\tau/3)$, and restrict to integer exponents. For example, to compute the Kohnen plus form corresponding to $1 + 54q^2 + 72q^3 + 432q^5 + 270q^6 + ...$ we multiply \begin{align*} &\quad \Big( 1 + 27q^{8/3} + 72q^4 + 216q^{20/3} + 270q^8 + ... \Big) \Big( 1 + 2q^{1/3} + 2q^{4/3} + 2q^3 + ... \Big) \\ &= 1 + 2q^{1/3} + 2q^{4/3} + 27q^{8/3} + 56q^3 +  126q^4 + 144q^{13/3} + 146q^{16/3} + 54q^{17/3} + 216q^{20/3} + 576q^7 + ... \end{align*} and restrict to integer exponents to find the Cohen Eisenstein series $$1 + 56q^3 + 126q^4 + 576q^7 + ...$$ of weight $7/2$ \cite{C}. \par \

The second part of remark 10 means that to compute the level three form corresponding to a Kohnen plus form $f(\tau)$, we multiply $f(3\tau/4)$ by $\vartheta(\tau/4)$ and restrict to integer exponents. For example, the Cohen Eisenstein series of weight $5/2$ is $$f(\tau) = 1 - 10q - 70q^4 - 48q^5 - 120q^8 - 250q^9 - 240q^{12} - 240q^{13} - ...$$ and since \begin{align*} &\quad \Big( 1 - 10q^{3/4} - 70q^3 - 48q^{15/4} - 120q^6 - 250q^{27/4} - 240q^9 - ... \Big) \Big( 1 + 2q^{1/4} + 2q + 2q^{9/4} + 2q^4 + ... \Big) \\ &= 1 + 2q^{1/4} -10q^{3/4} - 18q - 20q^{7/4} + 2q^{9/4} - 90q^3 - 140q^{13/4} - 48q^{15/4} - 234q^4 - ...\end{align*} we see that the corresponding level three form is $$1 - 18q - 90q^3 - 234q^4 - ... \in M_3^+(\Gamma_1(3)).$$ We could also point out that the theta function $\vartheta(\tau) = 1 + 2q + 2q^4 + 2q^9 + ... \in M_{1/2}^+(\Gamma_0(4))$ itself corresponds to the theta function $\theta(\tau) = 1 + 6q + 6q^3 + 6q^4 + 12q^7 + ... \in M_1^+(\Gamma_0(3),\chi)$ of $x^2 + xy + y^2$ under this isomorphism.

\end{ex}

\section{Hermitian modular products of small weight}

Recall that the Borcherds lift takes vector-valued modular forms for the (non-dual) Weil representation attached to a lattice of signature $(b^+,2)$ with singularities at cusps to orthogonal modular forms with product expansions and known divisors (\cite{B},\cite{Br}). Given such an automorphic product one can produce automorphic forms on subgrassmannians essentially by restriction, with a renormalization process to deal with a possible zero or pole there. This operation is called the quasi-pullback (see \cite{M}, section 3) and it can be used to construct infinite families of Borcherds products. For example, Gritsenko and Nikulin constructed a paramodular form of weight $12$ in every level $t$ except $t=1$ using this kind of argument for the lift of $\Delta^{-1}$ to the Grassmannian of $II_{26,2}$ (\cite{GN2}, remark 4.4). It is also possible to find automorphic forms on subgrassmannians by tensoring the original input form by various theta functions and taking the Borcherds lift of this. Ma showed in \cite{M} that these processes lead to the same result, at least when Koecher's principle applies. \par \

Theta decompositions allow one to identify the nearly-holomorphic input functions with weak Jacobi forms, and in the case of integral index, setting $z=0$ in those Jacobi forms has the same effect as tensoring with theta functions as in the last paragraph. Taking theta decompositions into Jacobi forms of nonintegral index and setting $z=0$ gives a similar interpretation of the $\Theta$-contraction of \cite{M}. \par \

We give an example here by showing that for even discriminants there always exist Hermitian modular products of small weight. These are essentially the same as orthogonal modular forms on Grassmannians of signature $(4,2)$ lattices of the form $\Lambda_K = \mathcal{O}_K \oplus II_{2,2}$, where $\mathcal{O}_K$ denotes the ring of integers in an imaginary-quadratic number field $K$ together with its norm-form $N_{K/\mathbb{Q}}$ (see \cite{Dern} for details). This is based on an observation of A. Krieg who suggested this application to me (see \cite{Kr}). \par

\begin{prop} Let $K$ be an imaginary-quadratic field of even discriminant $d_K$ and set $m = -\frac{d_K}{4}$. \\ (i) If $m$ is a sum of three nonzero squares then there is a holomorphic product for $\Lambda_K$ of weight two. \\ (ii) If $m$ is a sum of two nonzero squares then there is a holomorphic product for $\Lambda_K$ of weight three. \\ (iii) If $m=1$ then there is a holomorphic product for $\Lambda_K$ of weight four.
\end{prop}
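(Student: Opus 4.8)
The plan is to realize $\Lambda_K$ as the orthogonal complement of a \emph{rootless} positive-definite lattice inside a slightly larger lattice $L_s := A_1^{\,s+1}\oplus II_{2,2}$ (writing $A_1=\langle 2\rangle$ and $s$ for the number of squares), to exhibit a holomorphic Borcherds product of small weight on $L_s$, and then to pass to $\Lambda_K$ by quasi-pullback. By Ma's theorem \cite{M}, in the form developed here---theta-decompose the weakly holomorphic input form via proposition 7, set $z=0$, and Borcherds-lift the resulting $\Theta$-contraction---this quasi-pullback is again a Borcherds product, and rootlessness of the complement is exactly what guarantees that it is holomorphic, nonzero, and of the same weight as the form on $L_s$.

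First I would set up the embedding. Since $d_K=-4m$ is a fundamental discriminant, $m$ is squarefree and $\mathcal{O}_K$ has norm form $\begin{pmatrix} 2 & 0\\ 0 & 2m\end{pmatrix}$. Writing $m=x_1^2+\dots+x_s^2$ with all $x_i\neq 0$, the vectors $f=e_0$ and $g=x_1e_1+\dots+x_se_s$ in $A_1^{\,s+1}$ span a sublattice with this Gram matrix; squarefreeness of $m$ forces $\gcd(x_i)=1$, so $g$ (hence $\langle f,g\rangle$) is primitive and $\langle f,g\rangle=\mathcal{O}_K$ is saturated. This gives a primitive embedding $\Lambda_K=\mathcal{O}_K\oplus II_{2,2}\hookrightarrow L_s$ of signatures $(4,2)\hookrightarrow(s+3,2)$, whose orthogonal complement is $R=\{y\in A_1^{\,s}:\sum_i x_iy_i=0\}$, of rank $s-1$. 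The only norm-$2$ vectors of $A_1^{\,s}$ are the $\pm e_i$, and $e_i\in R$ would force $x_i=0$; since every $x_i$ is nonzero, $R$ is rootless. This is the one place the sum-of-nonzero-squares hypothesis enters, and it is the crux: a rootless complement means the product on $L_s$ does not vanish identically along the sub-Grassmannian of $\Lambda_K$, so its quasi-pullback needs no renormalization and keeps the same weight.

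Next I would construct the product on $L_s$. For signature $(s+3,2)$ the Borcherds lift sends a weakly holomorphic input form of weight $1-(s+3)/2$, with effective (nonnegative, integral) principal part, to a holomorphic product of weight equal to half its constant term; so it suffices to find such a form with constant term $2(5-s)=10-2s$. After theta decomposition (peeling off the full positive-definite part $A_1^{\,s+1}$) these input forms become weak Jacobi forms of weight $0$, which for $s=1,2,3$ can be written down directly. Conveniently, the three products are linked by quasi-pullback along the root sublattices $A_1$ (each such step contributes $\tfrac12\#\{\text{roots}\}=1$ to the weight), so it is enough to construct the single weight-$2$ product on $A_1^{4}\oplus II_{2,2}$ and pull it back; this yields holomorphic products on $L_s$ of weights $4,3,2$ for $s=1,2,3$.

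Finally I would assemble the claim. Quasi-pulling back the weight-$(5-s)$ product on $L_s$ along the rootless $R$ produces a holomorphic product on $\Lambda_K$ of the same weight $5-s$, namely $2,3,4$ in cases (i),(ii),(iii); case (iii) ($m=1$, $s=1$) needs no pullback since $L_1=\Lambda_K$. The genuine obstacle is the construction of the master input form on $A_1^4\oplus II_{2,2}$ and the verification that its principal part is effective with the prescribed constant term---equivalently, that the corresponding weight-$0$ weak Jacobi form has nonnegative coefficients in the hyperbolic direction; once that is in hand, rootlessness of $R$ and the weight bookkeeping (proposition 7 together with \cite{M}) make the rest routine.
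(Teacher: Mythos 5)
Your proposal is, in substance, the paper's own proof with a two-stage reorganization: both arguments embed the Gram matrix $\begin{pmatrix} 2 & 0 \\ 0 & 2m \end{pmatrix}$ of $\mathcal{O}_K$ into $A_1$'s via a sum-of-squares representation of $m$, start from the same master weight $-2$ input form on $4A_1 \oplus II_{2,2}$ with principal part $4\mathfrak{e}_{0} + q^{-1/4}\sum_{\gamma} \mathfrak{e}_{\gamma}$, and descend to $\Lambda_K$ by theta decomposition (proposition 7) followed by setting $z=0$, with the nonzero-squares hypothesis controlling the roots of the orthogonal complement. The packaging differs in two ways. First, you factor the descent through the intermediate lattices $L_s = A_1^{s+1}\oplus II_{2,2}$ and invoke Ma's comparison theorem \cite{M} for the weight bookkeeping (rootless complement $\Rightarrow$ no weight gain; each $A_1$-stage $\Rightarrow$ gain $1$), whereas the paper descends in a single step using a representation $m = v_1^2+v_2^2+v_3^2$ with possibly zero entries and computes the $q^0\mathfrak{e}_0$-coefficient of $\Phi(\tau,0)$ directly from the coefficient formula of proposition 7: the root count of the complement appears there concretely as $\#\{(r_1,r_2)\in\mathbb{Z}^2 : (1+a^2)r_1^2 + 2ab\,r_1r_2 + (1+b^2)r_2^2 = 1\}$, giving twice the weight as $4$, $6$, or $8$ without any appeal to \cite{M}. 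Second, the step you flag as ``the genuine obstacle''---existence of the master form with effective principal part and constant term $4$---is exactly what the paper settles by citation: this form is the input of a singular-weight product, namely the case $m=4$ of theorem 8.1 of \cite{Sch} up to an involution (dualizing and rescaling the lattice), so your open point is a known result rather than a gap, but as written your proof is incomplete precisely there, since ``can be written down directly'' is asserted and never done. Your squarefreeness argument for primitivity of $g$ is correct (the paper instead simply chooses $v_1$ primitive, which is automatic here) and the rest of your bookkeeping---input weight $1-(s+3)/2$, weight $=\tfrac12 c(0,0) = 5-s$---matches the paper's computation.
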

The cases $m=1,2$ are well-known and were important in finding the structure of the rings of Hermitian modular forms for $K = \mathbb{Q}(i)$ and $K = \mathbb{Q}(\sqrt{-2})$ in \cite{DK1},\cite{DK2}. One can also find the cases $m=5,6$ in the tables of appendix B of \cite{W1}. Those tables also show that no such statement is possible for odd discriminants; for example, the holomorphic products for $\mathbb{Q}(\sqrt{-3})$ and $\mathbb{Q}(\sqrt{-7})$ of smallest weight have weight $9$ and $7$, respectively. Note that (i) and (ii) can both occur for a single $m$, e.g. $m = 17 = 1^2 + 4^2 = 2^2 + 2^2 + 3^2$, where we get holomorphic products of both weights.
\begin{proof} The products in the claim arise as quasi-pullbacks of a singular-weight product on a Grassmannian of type $O(6,2)$; we will show how these arise as Borcherds lifts using the theta decomposition. Suppose first that $m$ has a representation of the form $m = 1 + a^2 + b^2$ with $a,b \in \mathbb{N}_0$. Then the matrix $$\mathbf{S} = \begin{pmatrix} 2 & 0 & 0 & 0 \\ 0 & 2m & 2a & 2b \\ 0 & 2a & 2 & 0 \\ 0 & 2b & 0 & 2 \end{pmatrix}$$ is the Gram matrix of $x_1^2 + x_2^2 + x_3^2 + x_4^2$ with respect to the basis $(1,0,0,0),(0,1,a,b),(0,0,1,0),(0,0,0,1)$ of $\mathbb{Z}^4$. There is a nearly-holomorphic modular form $F$ of weight $-2$ with principal part $$F(\tau) = 4\mathfrak{e}_{(0,0,0,0)} + q^{-1/4} (\mathfrak{e}_{(\frac{1}{2},0,0,0)} + \mathfrak{e}_{(0,\frac{1}{2},0,0)} + \mathfrak{e}_{(0,0,\frac{1}{2},0)} + \mathfrak{e}_{(0,0,0,\frac{1}{2})}) + ...$$ (Under the Borcherds lift, this produces the singular weight product on the lattice $4A_1 \oplus II_{2,2}$. Up to an involution, i.e. taking the dual lattice and rescaling, this is the case $m=4$ of theorem 8.1 of \cite{Sch}.) Through the theta decomposition we can interpret $F$ as a weak Jacobi form $\Phi(\tau,z)$ of index $\mathcal{M} = \begin{pmatrix} \frac{1+b^2}{m} & -\frac{ab}{m} \\ -\frac{ab}{m} & \frac{1 + a^2}{m} \end{pmatrix}, \, B = \begin{pmatrix} 0 & 0 \\ a/m & b/m \end{pmatrix}$ for the Weil representation attached to the Gram matrix $\begin{pmatrix} 2 & 0 \\ 0 & 2m \end{pmatrix}$, in which we then set $z=0$. By proposition 7 the coefficient of $q^0 \mathfrak{e}_0$ in $\Phi(\tau,0)$ (i.e. twice the weight of the Borcherds lift) is $$4 + \#\left\{r \in \mathbb{Z}^2: \; \frac{1}{4}r^T \mathcal{M}^{-1} r = \frac{1}{4}\right\} = 4 + \#\{(r_1,r_2) \in \mathbb{Z}^2: \; (1+a^2) r_1^2 + 2ab r_1 r_2 + (1+b^2)r_2^2 = 1\};$$ this is $4$ if $a,b$ are both nonzero, and $6$ if one of $a,b$ is zero, and $8$ if both of $a,b$ are zero. \par \

The same argument works in the general case, but it is somewhat messier to write out: we choose a (primitive) vector $v = v_1 \in \mathbb{Z}^3$ with $v^T v = m$, extend it to a basis $\{v_1,v_2,v_3\}$ of $\mathbb{Z}^3$ and apply theta decomposition to the Gram matrix of $x_1^2 + x_2^2 + x_3^2 + x_4^2$ with respect to the basis (in block form) $\{(1,0),(0,v_1),(0,v_2),(0,v_3)\}$ of $\mathbb{Z}^4$.
\end{proof}

\begin{ex} Using proposition 7, the principal parts of these input functions can be read off the principal part of $$F(\tau) = 4\mathfrak{e}_{(0,0,0,0)} + q^{-1/4} (\mathfrak{e}_{(\frac{1}{2},0,0,0)} + \mathfrak{e}_{(0,\frac{1}{2},0,0)} + \mathfrak{e}_{(0,0,\frac{1}{2},0)} + \mathfrak{e}_{(0,0,0,\frac{1}{2})}) + ...$$  We list the first few examples below: \par \

\begin{longtable}{|l|l|l|}
\caption{Principal parts for Hermitian modular products of small weight} \\
\hline
         $m$            & Weight &              Principal part                                                          \\ \hline
$1$ & $4$ & $8 \mathfrak{e}_{(0,0)} + q^{-1/4} (\mathfrak{e}_{(1/2,0)} + \mathfrak{e}_{(0,1/2)})$ \\ \hline
$2$ & $3$ & $6 \mathfrak{e}_{(0,0)} + 2q^{-1/8} (\mathfrak{e}_{(0,1/4)} + \mathfrak{e}_{(0,3/4)}) + q^{-1/4} \mathfrak{e}_{(1/2,0)}$ \\ \hline
$5$ & $3$ & $\begin{aligned}&6 \mathfrak{e}_{(0,0)} + q^{-1/20} (\mathfrak{e}_{(0,1/10)} + \mathfrak{e}_{(0,9/10)} + \mathfrak{e}_{(1/2,2/5)} + \mathfrak{e}_{(1/2,3/5)}) + \\ &\quad + q^{-1/5} (\mathfrak{e}_{(0,1/5)} + \mathfrak{e}_{(0,4/5)}) + q^{-1/4} (\mathfrak{e}_{(1/2,0)} + \mathfrak{e}_{(0,1/2)}) \end{aligned}$ \\ \hline
$6$ & $2$ & $\begin{aligned} &4 \mathfrak{e}_{(0,0)} + 2q^{-1/24} (\mathfrak{e}_{(0,1/12)} + \mathfrak{e}_{(0,5/12)} + \mathfrak{e}_{(0,7/12)} + \mathfrak{e}_{(0,11/12)}) + \\ &\quad + q^{-1/6} (\mathfrak{e}_{(0,1/6)} + \mathfrak{e}_{(0,5/6)}) + q^{-1/4} \mathfrak{e}_{(1/2,0)} \end{aligned}$ \\ \hline
$10$ & $3$ & $\begin{aligned} &6 \mathfrak{e}_{(0,0)} + q^{-1/40} (\mathfrak{e}_{(0,1/20)} + \mathfrak{e}_{(0,9/20)} + \mathfrak{e}_{(0,11/20)} + \mathfrak{e}_{(0,19/20)}) + \\ &\quad + q^{-3/20} (\mathfrak{e}_{(1/2,3/10)} + \mathfrak{e}_{(1/2,7/10)}) + \\ &\quad + q^{-9/40} (\mathfrak{e}_{(0,3/20)} + \mathfrak{e}_{(0,7/20)} + \mathfrak{e}_{(0,13/20)} + \mathfrak{e}_{(0,17/20)}) + \\ &\quad + q^{-1/4} \mathfrak{e}_{(1/2,0)} \end{aligned}$ \\ \hline
$13$ & $3$ & $\begin{aligned} &6 \mathfrak{e}_{(0,0)} + q^{-1/13} (\mathfrak{e}_{(0,1/13)} + \mathfrak{e}_{(0,12/13)}) + \\ &\quad + q^{-9/52} (\mathfrak{e}_{(0,3/26)} + \mathfrak{e}_{(0,23/26)} + \mathfrak{e}_{(1/2,5/13)} + \mathfrak{e}_{(1/2,8/13)}) + \\ &\quad + q^{-3/13} (\mathfrak{e}_{(0,4/13)} + \mathfrak{e}_{(0,9/13)}) + \\ &\quad + q^{-1/4} (\mathfrak{e}_{(0,1/2)} + \mathfrak{e}_{(1/2,0)}) \end{aligned}$ \\ \hline
$14$ & $2$ & $\begin{aligned} &4 \mathfrak{e}_{(0,0)} + q^{-1/56} (\mathfrak{e}_{(0,1/28)} + \mathfrak{e}_{(0,13/28)} + \mathfrak{e}_{(0,15/28)} + \mathfrak{e}_{(0,27/28)}) + \\ &\quad + q^{-1/28} (\mathfrak{e}_{(1/2,5/14)} + \mathfrak{e}_{(1/2,9/14)}) + \\ &\quad + q^{-1/14}(\mathfrak{e}_{(0,1/14)} + \mathfrak{e}_{(0,13/14)}) + \\ &\quad + q^{-1/7} (\mathfrak{e}_{(0,2/7)} + \mathfrak{e}_{(0,5/7)}) + \\ &\quad + q^{-9/56} (\mathfrak{e}_{(0,3/28)} + \mathfrak{e}_{(0,11/28)} + \mathfrak{e}_{(0,17/28)} + \mathfrak{e}_{(0,25/28)}) + \\ &\quad + q^{-1/4} \mathfrak{e}_{(1/2,0)} \end{aligned}$ \\ \hline
$17$ & $2$ & $\begin{aligned} &4 \mathfrak{e}_{(0,0)} + 2q^{-1/17} (\mathfrak{e}_{(0,1/17)} + \mathfrak{e}_{(0,6/17)} + \mathfrak{e}_{(0,11/17)} + \mathfrak{e}_{(0,16/17)}) + \\ &\quad + q^{-9/68} (\mathfrak{e}_{(0,3/34)} + \mathfrak{e}_{(0,31/34)} + \mathfrak{e}_{(1/2,7/17)} + \mathfrak{e}_{(1/2,10/17)}) + \\ &\quad + q^{-1/4} (\mathfrak{e}_{(1/2,0)} + \mathfrak{e}_{(0,1/2)}) \end{aligned}$ \\ \hline
$17$ & $3$ & $\begin{aligned} &6 \mathfrak{e}_{(0,0)} + q^{-1/68} (\mathfrak{e}_{(0,1/34)} + \mathfrak{e}_{(0,33/34)} + \mathfrak{e}_{(1/2,8/17)} + \mathfrak{e}_{(1/2,9/17)}) + \\ &\quad + q^{-2/17} (\mathfrak{e}_{(0,6/17)} + \mathfrak{e}_{(0,11/17)}) + \\ &\quad + q^{-4/17} (\mathfrak{e}_{(0,2/17)} + \mathfrak{e}_{(0,15/17)}) + \\ &\quad + q^{-13/68} (\mathfrak{e}_{(0,9/34)} + \mathfrak{e}_{(0,25/34)} + \mathfrak{e}_{(1/2,4/17)} + \mathfrak{e}_{(1/2,13/17)}) \\ &\quad + q^{-1/4} (\mathfrak{e}_{(1/2,0)} + \mathfrak{e}_{(0,1/2)}) \end{aligned}$ \\ \hline
\end{longtable}

\end{ex}

\textbf{Acknowledgments:} I thank Shaul Zemel for pointing out some errors in the published version of this note. The author is supported by the LOEWE research unit Uniformized Structures in Arithmetic and Geometry. 

\bibliographystyle{plainnat}
\bibliography{\jobname}

\end{document}